\definecolor{Gray}{gray}{0.9}
\definecolor{TableRowColor}{rgb}{0.91,1.0,0.98}
\definecolor{TableMentionColor}{rgb}{0.0, 0.47, 0.44}
\newcommand{\ra}[1]{\renewcommand{\arraystretch}{#1}}
\def\argmin{\mathop{\rm argmin}}
\def\Argmin{\mathop{\rm Argmin}}
\def\la{\langle}
\def\ra{\rangle}
\def\beq{\begin{equation}}
\def\eeq{\end{equation}}
\def\ba{\begin{array}}
	\def\ea{\end{array}}
\newcommand{\R}{\mathbb{R}}
\newcommand{\Def}{\stackrel{\mathrm{def}}{=}}
\newcommand{\mat}[1]{\mathbf{#1}}
\newtheorem{assumption}{Assumption}
\let\origtheassumption\theassumption
\DeclareMathAlphabet{\mathcalorigin}{OMS}{cmsy}{m}{n}
\renewcommand{\aa}{\mathbf{a}}
\renewcommand{\AA}{\mathbf{A}}
\providecommand{\bb}{\mathbf{b}}
\providecommand{\DD}{\mathbf{D}}
\providecommand{\ee}{\mathbf{e}}
\providecommand{\ff}{\mathbf{f}}
\providecommand{\hh}{\mathbf{h}}
\providecommand{\nn}{\mathbf{n}}
\providecommand{\QQ}{\mathbf{Q}}
\renewcommand{\ss}{\mathbf{s}}
\providecommand{\uu}{\mathbf{u}}
\providecommand{\vv}{\mathbf{v}}
\providecommand{\xx}{\mathbf{x}}
\providecommand{\XX}{\mathbf{X}}
\providecommand{\yy}{\mathbf{y}}
\providecommand{\zz}{\mathbf{z}}
\providecommand{\cO}{\mathcal{O}}
\newcommand{\xopt}{\xx^{\star}}
\newcommand{\X}{\mathcalorigin{X}}
\newcommand{\diam}{\mathcalorigin{D}_{\X}}
\newcommand{\bigO}[1]{\mathcalorigin{O}\left(#1\right)}
\newcommand{\bigOtilde}[1]{\tilde{\mathcalorigin{O}}\left(#1\right)}
\providecommand{\blambda}{\bm{\lambda}}
\def\nn{{ \nonumber }}
\renewcommand{\epsilon}{\varepsilon}
\newcommand{\norm}[1]{\left\lVert \, #1 \, \right\rVert}
\newcommand{\normsqr}[1]{\norm{#1}^2}
\declaretheorem[name=Theorem,numberwithin=section]{thm-rest}
\declaretheorem[name=Lemma,numberwithin=section]{lem-rest}
\declaretheorem[name=Corollary,numberwithin=section]{cor-rest}
\declaretheorem[name=Definition,numberwithin=section]{def-rest}
\declaretheorem[name=Proposition,numberwithin=section]{prop-rest}
\declaretheorem[name=Assumption,numberwithin=section]{ass-rest}
\title[Linearization Algorithms for Fully Composite Optimization]{Linearization Algorithms for Fully Composite Optimization}
\begin{document}

\maketitle

\begin{abstract}%
  This paper studies first-order algorithms for solving fully composite optimization problems over convex and compact sets. We leverage the structure of the objective by handling its differentiable and non-differentiable components separately, linearizing only the smooth parts. This provides us with new generalizations of the classical Frank-Wolfe method and the Conditional Gradient Sliding algorithm, that cater to a subclass of non-differentiable problems. Our algorithms rely on a stronger version of the linear minimization oracle, which can be efficiently implemented in several practical applications. We provide the basic version of our method with an affine-invariant analysis and prove global convergence rates for both convex and non-convex objectives. Furthermore, in the convex case, we propose an accelerated method with correspondingly improved complexity. Finally, we provide illustrative experiments to support our theoretical results.
\end{abstract}

\begin{keywords}%
  convex optimization, composite problems, Frank-Wolfe algorithm, acceleration %
\end{keywords}

\section{Introduction}
In this paper we consider fully composite optimization problems of the form 
\begin{equation}
	\label{IntroMainPb}
	\min\limits_{\xx \in \X} \left[ \varphi(\xx) \;\; \Def \;\;  F( \ff(\xx), \xx)\right],
\end{equation}
where $\X$ is a convex and compact set, $F: \R^n \times \X \to \R$ is a simple but possibly \textit{non-differentiable} convex function 
and $\ff: \X \to \R^n$ is a smooth mapping,
which is the \emph{main source of computational burden}.

Problems of this type cover and generalize many classical use-cases of composite optimization and are often encountered in applications.
In this work, we develop efficient algorithms for solving \eqref{IntroMainPb} by leveraging the \emph{structure of the objective} and using the \textit{linearization principle}. Our method generalizes the well-known Frank-Wolfe algorithm \citep{frank1956algorithm} and ensures asymptotically faster convergence rates compared to methods treating $\varphi$ in a black-box fashion.

A classical algorithm for solving smooth optimization problems is the Gradient Descent method (GD),
proposed by Cauchy in 1847 (see historical note by \cite{lemarechal2012cauchy}). It rests on the idea of linearizing the function around the current iterate, taking a step in the negative gradient direction and projecting the result onto the feasible set $\X$ for $k \geq 0$:\vspace{-2mm}
\beq \label{GradMethod}
\ba{rcl}
\yy_{k + 1} & = & 
\pi_{\X}\bigl( \yy_k - \alpha_k \nabla \varphi(\yy_k)\bigr), \qquad \alpha_k > 0,
\ea
\eeq
where $\pi_{\X}$ is the projection operator onto $\X$. Surprisingly, the same kind of iterations can minimize \emph{general} non-smooth convex functions by substituting $\nabla \varphi(\yy_k)$ with any \emph{subgradient} in the subdifferential $\partial \varphi(\yy_k)$. The resulting Subgradient method was proposed by \cite{shor1985minimization}.

Another notable example for smooth optimization over a convex and  \emph{bounded} constraint set $\X$ is the Frank-Wolfe (FW) method~\citep{frank1956algorithm}. Again, a linearization of the objective around the current iterate is used to query the so-called \emph{linear minimization oracle} (LMO) associated with $\X$, for every $k \geq 0$:
\beq \label{FWMethod}
\ba{rcl}
\yy_{k + 1} & \in & \Argmin\limits_{\xx} \bigl\{ \, \la \nabla \varphi(\yy_k), \xx \ra
\; : \;  \xx \in \yy_k + \gamma_k( \X - \yy_k ) \, \bigr\}, \qquad \gamma_k \in (0, 1].
\ea
\eeq
Steps of type~\eqref{FWMethod} can be significantly cheaper than those involving projections~\eqref{GradMethod} for a few notable domains such as nuclear norm balls and spectrahedrons~\citep{combettes2021complexity}, making FW the algorithm of choice in such scenarios. 
Moreover, the solutions found by FW methods can benefit from additional properties such as sparsity~\citep{jaggi2013revisiting}. These desirable features make FW methods suitable for large scale optimization, a fact which led to an increased interest in recent years (we point the reader to the monograph of~\citet{braun2022FWbook} for a detailed presentation). Unfortunately, the vanilla FW algorithm does not extend to non-differentiable problems in the same straightforward manner as GD -- a counterexample is given by~\citet{nesterov2018complexity}. The question of developing non-smooth versions of the FW algorithm therefore remains open, and is the main focus of this article. 

Finally, we touch on the issue of convergence rates -- a principal means of theoretically characterizing optimization algorithms. The classical monograph of \cite{nemirovskii1983problem} establishes that the $\cO(1 / \sqrt{k})$ rate of the Subgradient method is optimal for \emph{general non-differentiable} convex problems, while the $\cO(1 / k)$ rate of its counterpart GD is far from the lower bound of $\Omega(1/k^2)$ for $L-$smooth convex functions. Similar results are established by~\citet{lan2013complexity} for LMO-based algorithms, although in this case the $\cO(1 / k)$ rate is matched by a lower bound for smooth convex minimization. This relatively slow convergence of FW algorithms is a result of their  \textit{affine-invariant} oracle, which is independent on the choice of norm. In light of these lower bounds, one can only hope to improve  convergence rates by imposing additional structure on the problem to be solved.

The present work leverages this observation and studies a subclass of (possibly) non-smooth and non-convex problems with the \emph{specific structure} of~\eqref{IntroMainPb}. Our methods require only linearizations of the differentiable component $\ff$, while the non-differentiable function $F$ is kept as a part of the subproblem solved within oracle calls. We show that this approach
is a viable way of generalizing FW methods to address problem~\eqref{IntroMainPb}, with the possibility of acceleration. Our contributions can be summarized as follows.
\begin{itemize}[itemsep=0pt, topsep=1pt]
	\item We propose a basic method for problem~\eqref{IntroMainPb}, which is \textit{affine-invariant} and equipped with accuracy certificates. We prove the global convergence rate 
	of $\cO(1/k)$ in the convex setting, and of $\tilde{\cO}(1 / \sqrt{k})$ in the non-convex case. 
	\item We propose an accelerated method with inexact proximal steps which attains a convergence rate of $\cO(1 / k^2)$ for convex problems. Our algorithm achieves the optimal $\bigO{\epsilon^{-1/2}}$ oracle complexity for smooth convex problems in terms of the number of computations of $\nabla \ff$.
	\item We provide proof-of-concept numerical experiments, that demonstrate the efficiency of our approach for solving composite problems.
\end{itemize}

\paragraph{Related Work.}
The present work lies at the intersection of two broad lines of study: general methods for composite optimization and FW algorithms. 
The former category encompasses many approaches that single out non-differentiable components in the objective's structure, and leverage this knowledge in the design of efficient optimization algorithms.
This approach originated in the works 
of \citet{burke1985descent,burke1987second,nesterov1989effective,nemirovski1995information,pennanen1999graph,boct2007new,boct2008new}.
A popular class of \textit{additive} composite optimization problems
was proposed by \citet{beck2009fast,nesterov2013gradient}
and the modern algorithms for general composite formulations were developed by
\citet{cui2018composite,drusvyatskiy2018error,drusvyatskiy2019efficiency,bolte2020multiproximal,burke2021study,doikov2022high}.

The primitive on which most of the aforementioned methods rely is a \textit{proximal-type} step -- a generalization of~\eqref{GradMethod}. Depending on the geometry of the set $\X$, such steps may pose a significant computational burden. \citet{doikov2022high} propose an alternative \emph{contracting-type} method for \textit{fully composite} problems, 
which generalizes the vanilla FW algorithm. Their method relies on a simpler primitive built on the linearization principle, which can be much cheaper in practice.
We study the same problem structure as \citet{doikov2022high} and devise methods with several advantages over the aforementioned approach, including an \textit{affine-invariant analysis}, \textit{accuracy certificates}, convergence guarantees for non-convex problems and, in the convex case, an accelerated convergence. Moreover, we decouple stepsize selection from the computational primitive, to enable efficient line search procedures.%

Our methods are also intimately related to FW algorithms, which they generalize. For smooth and convex problems, vanilla FW converges at the cost of $\bigO{\epsilon^{-1}}$ LMO and \emph{first order oracle} (FO) calls in terms the Frank-Wolfe gap -- an accuracy measure bounding functional suboptimality~\citep{jaggi2013revisiting}. For smooth non-convex problems, a gap value of at most $\epsilon$ is attained after $\bigO{\epsilon^{-2}}$ LMO and FO calls~\citep{lacoste2016convergence}. Due to the relatively slow convergence of LMO-based methods, recent efforts have gone into devising variants with improved guarantees. The number of FO calls was reduced to the lower bound for smooth convex optimization by~\citet{lan2016conditional}, local acceleration was achieved following a burn-in phase by~\citet{diakonikolas2020locally, carderera2021parameter, chen2022accelerating}, and empirical performance was enhanced by adjusting the update direction with gradient information by~\citet{combettes2020boosting}. Of the aforementioned works, closest to ours is the Conditional Gradient Sliding (CGS) algorithm proposed by~\citet{lan2016conditional} and further studied by~\citet{yurtsever2019conditional, qu2018non}. CGS uses the acceleration framework of~\citet{nesterov1983method} and solves the projection subproblem inexactly via the FW method, achieving the optimal complexity of $\bigO{\epsilon^{-1/2}}$ FO calls for smooth convex problems. We rely on a similar scheme for improving FO complexity in the convex case.

In the context of generic non-smooth convex objectives, the FW algorithm was studied by~\citet{lan2013complexity}, who proposes a smoothing-based approach matching the lower bound of $\Omega(\epsilon^{-2})$ LMO calls. The method however requires $\bigO{\epsilon^{-4}}$ FO calls, a complexity which is later improved to $\bigO{\epsilon^{-2}}$ by \citet{garber2016linearly} through a modified LMO for polytopes, by~\citet{ravi2019deterministic} with a (differently) modified LMO, and finally by \citet{thekumparampil2020projection} through a combination of smoothing and the CGS algorithm. Our algorithm, instead, leverages the structure of problem~\eqref{IntroMainPb} and a modified LMO to achieve improved rates, with the added benefit of an affine invariant method and analysis. We also mention FW methods for additive composite optimization~\citep{argyriou2014hybrid, yurtsever2018conditional, yurtsever2019conditional, zhao2022analysis}, with the former three relying on proximal steps and the latter assuming a very restricted class of objectives.

Finally, two concurrent works study FW methods for some restricted classes of non-smooth and non-convex problems. \citet{de2023short} shows that vanilla FW with line-search can be applied to the special class of upper$-C^{1, \alpha}$ functions, when one replaces gradients with an arbitrary element in the Clarke subdifferential. A rate of $\bigO{\epsilon^{-2}}$ is shown for reaching a Clarke-stationary point in a setting comparable to ours. A similar rate is shown by~\citet{kreimeier2023frank} for reaching a $d$-stationary point of abs-smooth functions through the use of a modified LMO. Both these algorithms are structure-agnostic. A summary of method complexities for solving non-smooth problems is provided in Table~\ref{table-complexities}. 

\begin{table}[t!]
	\footnotesize
	\centering
	\renewcommand{\arraystretch}{1.2}
	\setlength{\tabcolsep}{1.6pt}
	\begin{tabular}{ll@{\hspace{0pt}}ccc@{\hspace{-2.5pt}}r}\toprule
		\textbf{\small Reference} &  \textbf{\small $\varphi$ subclass} & \textbf{\small Use structure?} &\textbf{\small \# FO}  & \textbf{\small \# PO/LMO} & \textbf{\small Observations}  \\
		\midrule
		\scriptsize \citet{shor1985minimization} & cvx, L-cont & no & $\bigO{\epsilon^{-2}}$$\color{TableMentionColor}{^{\text{\tiny(1)}}}$ &  $\bigO{\epsilon^{-2}}$$\color{TableMentionColor}{^{\text{\tiny(1)}}}$ &  {\scriptsize projection} \\
		\scriptsize\citet{thekumparampil2020projection} & cvx, L-cont & no &  $\bigO{\epsilon^{-2}}$$\color{TableMentionColor}{^{\text{\tiny(1)}}}$ & $\bigO{\epsilon^{-2}}$$\color{TableMentionColor}{^{\text{\tiny(1)}}}$ & { \scriptsize smoothing, vanilla LMO}\\
		\scriptsize\citet{doikov2022high} & cvx, fully-comp & yes & $\bigO{\epsilon^{-1}}$$\color{TableMentionColor}{^{\text{\tiny(1)}}}$ &  $\bigO{\epsilon^{-1}}$$\color{TableMentionColor}{^{\text{\tiny(1)}}}$& {\scriptsize modif. LMO} \\
		\rowcolor{TableRowColor}\textbf{\scriptsize (this work) Alg. 2} & cvx, fully-comp. &yes & $\bigO{\epsilon^{-1/2}}$$\color{TableMentionColor}{^{\text{\tiny(1)}}}$ &  $\bigO{\epsilon^{-1}}$$\color{TableMentionColor}{^{\text{\tiny(1)}}}$ & {\scriptsize {modif. LMO}} \\
		\midrule\\[-3mm]
		\scriptsize\citet{de2023short} & non-cvx, upper-$C^{1,\alpha}$ & no & $\bigO{\epsilon^{-2}}$ $\color{TableMentionColor}{^{\text{\tiny(2)}}}$ & $\bigO{\epsilon^{-2}}$ $\color{TableMentionColor}{^{\text{\tiny(2)}}}$ & {\scriptsize vanilla LMO}\\
		\scriptsize\citet{kreimeier2023frank} & non-cvx, abs-smooth & no & $\bigO{\epsilon^{-2}}$ $\color{TableMentionColor}{^{\text{\tiny(3)}}}$& $\bigO{\epsilon^{-2}}$ $\color{TableMentionColor}{^{\text{\tiny(3)}}}$& {\scriptsize modif. LMO}   \\
		\scriptsize\citet{drusvyatskiy2019efficiency} & non-cvx, comp & yes & $\bigO{\epsilon^{-2}}$  $\color{TableMentionColor}{^{\text{\tiny(4)}}}$ & $\bigO{\epsilon^{-2}}$ $\color{TableMentionColor}{^{\text{\tiny(4)}}}$ & {\scriptsize prox. steps} \\
		\rowcolor{TableRowColor}\textbf{\scriptsize (this work) Alg. 1} & non-cvx, fully-comp &yes & $\bigOtilde{\epsilon^{-2}}$ $\color{TableMentionColor}{^{\text{\tiny(5)}}}$ & $\bigOtilde{\epsilon^{-2}}$ $\color{TableMentionColor}{^{\text{\tiny(5)}}}$& { \scriptsize modif. LMO}\\
		\bottomrule
	\end{tabular}
	\caption{ Summary of convergence complexities for solving non-smooth problems. Note $\color{TableMentionColor}{^{\text{(1)}}}$ marks complexities reaching an $\epsilon$ functional residual. Note $\color{TableMentionColor}{^{\text{(2)}}}$ marks complexities for reaching Clarke-stationary points. Note $\color{TableMentionColor}{^{\text{(3)}}}$ marks complexities for obtaining $d-$stationary points. Note $\color{TableMentionColor}{^{\text{(4)}}}$ 
		marks the complexity for reaching a small norm of the gradient mapping. Finally, note $\color{TableMentionColor}{^{\text{(5)}}}$ marks the complexity of minimizing the positive quantity~\eqref{gapCertificate}.}
	\label{table-complexities}
\end{table}

\paragraph*{Notation.}
We denote by $[n]$ the set $\{1, \ldots n\}$ and by $\norm{\cdot}$ the standard Euclidean norm, unless explicitly stated otherwise. We define the diameter of a bounded set $\X$ as $\diam \Def \max_{\zz, \yy \in \X} \{\norm{\zz - \yy}\}$. We use the notation $\Delta_n \Def \{ \blambda \in \R^{n}_{+} \, : \, \la \blambda, \ee \ra = 1 \}$ to denote the standard $n$-dimensional simplex, where $\ee$ is the vector of all ones. For a differentiable, scalar-valued function $f: \R^{d} \to \R$ we use $\nabla f(\xx) \in \R^d$ to denote its gradient vector and $\nabla^2 f(\xx) \in \R^{d \times d}$ to denote its Hessian matrix. For a differentiable vector-valued function $\ff: \R^{d} \to \R^{n}$ defined as $\ff = (f_1, f_2, \ldots f_n)$ we denote by $\nabla \ff(\xx)$ its Jacobian matrix defined as $ \nabla \ff(\xx) =  \sum_{i = 1}^n \ee_i \nabla f_i(\xx)^{\top}  \in \R^{n \times d},$
where $\ee_i$ are the standard basis vectors in $\R^n$.  We represent the second directional derivatives applied to the same direction $\hh \in \R^d$  as
$
\nabla^2 f(\xx)[\hh]^2 \Def  \la \nabla^2 f(\xx) \hh, \hh \ra \in  \R$, and $ 
\nabla^2 \ff(\xx)[\hh]^2 \Def \sum_{i = 1}^n \ee_i \nabla^2 f_i(\xx)[\hh]^2 \in \R^n$. 
\section{Problem Setup, Assumptions and Examples}
\label{SectionAssumptions}
The problems addressed by this work are represented by the following structured objective
\beq \label{MainProblem}
\ba{rcl}
\varphi^{\star} & = & \min\limits_{\xx \in \X} 
\Bigl[\,
\varphi(\xx) \;\; \Def \;\;  F( \ff(\xx), \xx)
\,
\Bigr], \qquad \X \subset \R^d,
\ea
\eeq
where $\X$ is a convex and compact set and the inner mapping $\ff: \X \to \R^n$ is differentiable and defined as $\ff(\xx) = (f_1(\xx), \ldots, f_n(\xx)) \, \in \, \R^n,$ where each $f_i: \X \to \R$ is differentiable. We assume access to a first-order oracle $\nabla \ff$, which is the main source of computational burden. The outer component $F: \R^n \times \X \to \R$, on the other hand, is \emph{directly accessible} to the algorithm designer and is \emph{simple} (see assumptions). However, $F$ is possibly non-differentiable.

In this work, we propose two algorithmic solutions addressing problem \eqref{MainProblem}, which we call a \emph{fully composite problem}. Our methods importantly  assume that subproblems of the form
\beq \label{MainSubproblem}
\ba{rcl}
\Argmin\limits_{\xx \in \X} F\bigl( \mat{A} \xx + \bb, \xx \bigr)
+ \la \uu, \xx \ra
\ea
\eeq
are efficiently solvable, where $\mat{A} \in \R^{n \times d}$ and $\bb \in \R^n, \uu \in \R^d$. 
Oracles of type ~\eqref{MainSubproblem} are sequentially called  during the optimization procedure and take as arguments linearizations of the difficult nonlinear components of~\eqref{MainProblem}. Naturally, solving~\eqref{MainSubproblem} cheaply is possible only when $F$ is simple and $\X$ has an amenable structure. 

In particular, template~\eqref{MainProblem} encompasses to some standard problem formulations. For example, the classical Frank-Wolfe setting is recovered when $F(\uu, \xx) \equiv u^{(1)}$, in which case problem \eqref{MainProblem} becomes $\min_{\xx \in \X} f_1(\xx)$ and subproblem \eqref{MainSubproblem} reduces to a simple LMO: $\argmin_{\xx \in \X} \la \uu, \xx \ra$. The setting of proximal-gradient methods is similarly covered, by letting $F(\uu, \xx) \equiv u^{(1)} + \psi(\xx)$ for a given convex function $\psi$ (e.g., a regularizer). Then, problem~\eqref{MainProblem} reduces to additive composite optimization~$\min_{\xx \in \X} \Bigl\{  f_1(\xx) + \psi(\xx) \Bigr\},$ and subproblem \eqref{MainSubproblem} becomes $\argmin_{\xx \in \X} \Bigl\{  \la \uu, \xx \ra + \psi(\xx) \Bigr\}.$

We now formally state the assumptions on the fully composite problem~\eqref{MainProblem}, along with commentary and examples.

\begin{assumption} \label{AssumptionF}
	The outer function $F: \R^n \times \X \to \R$ is jointly convex in its arguments. Additionally,  $F(\uu, \xx)$ is subhomogeneous in $\uu$:\vspace{-1mm}
	\beq \label{FSubH}
	\ba{rcl}
	F( \gamma \uu, \xx) & \leq & \gamma F(\uu, \xx), 
	\qquad
	\forall \uu \in \R^n, \; \xx \in \X, \; \gamma \geq 1.
	\ea
	\eeq
\end{assumption}

\edef\oldassumption{\the\numexpr\value{assumption}+1}
\setcounter{assumption}{0}
\renewcommand{\theassumption}{\oldassumption\alph{assumption}}

\begin{assumption} \label{AssumptionSmooth}
	The inner mapping $\ff: \X \to \R^n$ is differentiable and the following affine-invariant quantity is bounded:\vspace{-1mm}
	\beq \label{SDef}
	\ba{rcl}
	\mathcal{S} & \!\! = \!\! & \mathcal{S}_{\ff, F, \X}
	\; \Def 
	
	\sup\limits_{\substack{\xx, \yy \in \X, \,\gamma \in (0, 1] \\
			\yy_{\gamma}  = \xx + \gamma(\yy - \xx)}} 
	F\bigl( \frac{2}{\gamma^2}
	\bigl[ \ff(\yy_{\gamma}) - \ff(\xx) - \nabla \ff(\xx)(\yy_{\gamma} - \xx)  \bigr]
	, \yy_{\gamma} \bigr)
	\; < \; +\infty.
	\ea
	\eeq
\end{assumption}

\begin{assumption}\label{AssumptionLipCont} Each component $f_i(\cdot)$
	has a Lipschitz continuous gradient on $\X$ with constant $L_i$:
	\begin{equation*}
		\norm{\nabla f_i(\xx) - \nabla f_i(\yy)} \leq L_i \norm{\xx - \yy} \quad \forall \xx, \yy \in \X, \forall i \in [n].
	\end{equation*}
	We denote the vector of Lipschitz constants by $\mat{L} = (L_1, \ldots, L_n) \in \R^n$.
\end{assumption}
\let\theassumption\origtheassumption

\begin{assumption} \label{AssumptionConvexity}
	Each component $f_i : \X \to \R$ is convex.
	Moreover, $F(\cdot, \xx)$ is monotone $\forall \xx \in \X$. Thus, for any two vectors
	$\uu, \vv \in \R^n$ such that
	$\uu \leq \vv$ (component-wise), it holds that\vspace{-1mm}
	\beq \label{FMonotone}
	\ba{rcl}
	F(\uu, \xx) & \leq & F(\vv, \xx).
	\ea
	\eeq	
\end{assumption}

A few comments are in order. Assumption~\ref{AssumptionF}, which is also required by~\citet{doikov2022high}, represents the formal manner in which we ask that $F$ be simple -- through convexity and bounded growth in $\uu$. This assumption ensures convexity of subproblem~\eqref{MainSubproblem}, irrespective of the nature of $\ff$.

Assumption~\ref{AssumptionSmooth} is a generalization of the standard bounded curvature premise typical for Frank-Wolfe settings~\citep{jaggi2013revisiting}. Requirement~\eqref{SDef} is mild, as it only asks that the curvature of $\ff$ remains bounded under $F$ over $\X$. Importantly, the quantity $\mathcal{S}$ is \textit{affine-invariant} (remains unchanged under affine reparametrizations of $\X$), which enables us to obtain convergence rates with the same property. Further discussion on the importance of affine-invariant analysis for FW algorithms is provided by \citet{jaggi2013revisiting}. For mappings $\ff$ that are twice differentiable, we can bound the quantity $\mathcal{S}$
from Assumption~\ref{AssumptionSmooth} using Taylor's formula
and the second derivatives, as follows
$$
\ba{rcl}
\mathcal{S} & \leq & 
\sup\limits_{\substack{\xx, \yy \in \X, \, \gamma \in [0, 1] \\
		\yy_{\gamma}  = \xx + \gamma(\yy - \xx)}}
F( \nabla^2 \ff(\yy_{\gamma})[\yy - \xx]^2, \xx ).
\ea
$$
This quantity is reminiscent of the quadratic upper-bound used to analyze smooth optimization methods. In particular, for monotone non-decreasing $F$, a compact $\X$ and Lipschitz continuous $\nabla f_i$ with respect to a fixed norm $\| \cdot \|$, the assumption is satisfied with
$$
\ba{rcl}
\mathcal{S} & \leq & F(\mat{L} \diam^2 ) \;\; \Def \;\; \sup\limits_{\xx \in \X} F(\mat{L} \diam^2, \xx ).
\ea
$$ 

Assumption~\ref{AssumptionLipCont} is standard and considered separately from Assumption~\ref{AssumptionSmooth} to allow for different levels of generality in our results. The restriction to $\X$ makes this a locally-Lipschitz gradient assumption on $f_i$.

Finally, Assumption~\ref{AssumptionConvexity}, which is also made by~\citet{doikov2022high}, is required whenever we need to ensure the overall convexity of $\varphi(\xx)$. The monotonicity of $F$ is necessary in addition to convexity of each $f_i$, since the composition of convex functions is not necessarily convex \citep[][]{boyd2004convex}. We rely on this assumption for deriving asymptotically faster convergence rates in the convex setting (Section~\ref{SectionAccelerated}).

To conclude this section, we provide the main application examples that fall under our fully composite template and which satisfy to our assumptions. Further examples can be found in Appendix~\ref{app-extra-examples}.

\begin{example} \label{ExampleMax}
	Let $F(\uu, \xx) \equiv \max\limits_{1 \leq i \leq n} u^{(i)}$. Function $F$ satisfies Assumptions~\ref{AssumptionF}~and~\ref{AssumptionConvexity}
and problem \eqref{MainProblem} becomes
	\beq \label{MinMax}
	\ba{c}
	\min\limits_{\xx \in \X} \max\limits_{1 \leq i \leq n} f_i(\xx),
	\ea
	\eeq
while oracle~\eqref{MainSubproblem} becomes
	\beq \label{MaxSubproblem}
	\ba{rcl}
	\min\limits_{\xx \in \X} \max\limits_{1 \leq i \leq n} \la \aa_i, \xx \ra + b_i
	& \quad \Leftrightarrow \quad &
	\min\limits_{\xx \in \X, t \in \R} \bigl\{ t \; : \; \la \aa_i, \xx \ra + b_i \leq t, \, 1 \leq i \leq n \bigr\}.
	\ea
	\eeq
	Max-type minimization problems of this kind result from scalarization approaches in multi-objective optimization, and their solutions were shown to be (weakly) Pareto optimal~\citep[Chapter~3.1 in][]{miettinen1999nonlinear}. As such, problem~\eqref{MinMax} is relevant to a wide variety of applications requiring optimal trade-offs amongst several objective functions, and appears in areas such as machine learning, science and engineering~\citep[see the introductory sections of, e.g., ][]{daulton2022multi, zhang2020random}. Problem~\eqref{MinMax} also covers some instances of constrained $\ell_{\infty}$ regression.

When $\X$ is a polyhedron, subproblem~\eqref{MaxSubproblem} can be solved via Linear Programming, while for general $\X$ one can resort to  Interior-Point Methods~\citep{nesterov1994interior}. Another option for solving \eqref{MaxSubproblem} is to note that under strong duality~\citep{rockafellar1970convex} we have
	\beq \label{MaxTypeSubproblem}
	\ba{rcl}
	\min\limits_{\xx \in \X} \max\limits_{1 \leq i \leq n} \la \aa_i, \xx \ra + b_i
	& = & 
	\min\limits_{\xx \in \X} \max\limits_{\blambda \in \Delta_n}
	\sum\limits_{i = 1}^n \lambda^{(i)} \bigl[  \la \aa_i, \xx \ra + b_i \bigr]  = 
	\max\limits_{\blambda \in \Delta_n} g(\blambda),
	\ea
	\eeq
	where $g(\blambda) \Def \min\limits_{\xx \in \X} \sum_{i = 1}^n \lambda^{(i)} \bigl[  \la \aa_i, \xx \ra + b_i \bigr] $. The maximization of $g$ in~\eqref{MaxTypeSubproblem} can be done
	very efficiently for small values of $n$ (with, e.g., the Ellipsoid Method or
	the Mirror Descent algorithm), since evaluating $g(\blambda)$ and $\partial g(\blambda)$ reduces to a vanilla LMO call over $\X$. An interesting case is $n = 2$, for which~\eqref{MaxTypeSubproblem} becomes
a \emph{univariate} maximization problem and one may use binary search to solve it at the expense of a logarithmic number of LMOs.
\end{example}

\begin{example} \label{ExampleNorm}
	Let
	$
	F(\uu, \xx) \equiv  \| \uu \| 
	$
	for an arbitrary fixed norm $\| \cdot \|$. Function $F$ satisfies Assumption~\ref{AssumptionF} and problem~\eqref{MainProblem}
	can be interpreted as solving a system of non-linear equations
	over $\X$
	\beq \label{GSProblem}
	\ba{rcl}
	\min\limits_{\xx \in \X} \| \ff (\xx) \|,
	\ea
	\eeq
while oracle~\eqref{MainSubproblem} amounts to solving the (constrained) linear system 
	$
	\min\limits_{\xx \in \X} \| \mat{A}\xx + \bb \|	$. Problems of this kind can be encountered in applications such as robust phase retrieval~\citep{duchi2019solving} with phase constraints.
	
	The iterations of Algorithm~\ref{alg:basic} can be interpreted as a variant
	of the \textit{Gauss-Newton method} \citep{burke1995gauss, nesterov2007modified, tran2020stochastic},
	solving the (constrained) linear systems:
	\beq \label{GSAlgEx}
	\xx_{k + 1}  \in  \Argmin\limits_{\xx \in \X}
	\| \ff(\yy_k) + \nabla \ff(\yy_k)(\xx - \yy_k) \|, \ \ \text{ and }\ \ 
	\yy_{k + 1}  =  (1 - \gamma_k) \yy_k + \gamma_k \xx_{k + 1}.
	\eeq
	In the particular case of solving \textit{systems of non-linear equations} over compact convex sets, our algorithms can be seen as modified Gauss-Newton methods with global convergence guarantees.
\end{example}

\section{The Basic Method}
\label{SectionBasic}
We present the first new method for solving problem~\eqref{MainProblem} in Algorithm~\ref{alg:basic}.
The central idea is to \emph{linearize} the differentiable components of the objective and then to minimize this new model over the constraint $\X$, via calls to an oracle of type~\eqref{MainSubproblem}.
The next iterate is defined as a convex combination with coefficient (or \emph{stepsize}) $\gamma$ between the computed minimizer and the preceding iterate.
\begin{algorithm}[ht!]
	\setstretch{1.1}
	\begin{algorithmic}
		\STATE \textbf{Input:} $\yy_0 \in \X$
		\FOR{$k = 0, 1, \dots $}
		\STATE Compute 
		\vspace{-2mm}
		$$
		\ba{rcl}
		\xx_{k + 1} & \in & \Argmin\limits_{\xx \in \X}
		F\bigl(  \ff(\yy_k) + \nabla \ff(\yy_k) (\xx - \yy_k), \, \xx \bigr)
		\ea
		$$
		\vspace{-3mm}
		\STATE Choose $\gamma_k \in (0, 1]$ by a predefined rule or with line search
		\STATE Set 
		$\yy_{k + 1} = (1 - \gamma_k) \yy_k + \gamma_k \xx_{k + 1}$
		\ENDFOR
	\end{algorithmic}
	\caption{Basic Method}
	\label{alg:basic}
\end{algorithm}

A similar method for tackling problems of type~\eqref{MainProblem} in the convex setting was proposed by \citet{doikov2022high}. 
Different from theirs, our method decouples the parameter $\gamma_k$ from the minimization subproblem.
This change is crucial since it allows us to choose the parameter $\gamma_k$ \emph{after} minimizing the model, thus enabling us to use efficient line search rules. 
Moreover, we provide Algorithm~\ref{alg:basic} with a more advanced \emph{affine-invariant} analysis and establish its convergence in the \emph{non-convex} setup. 

We also mention that for solving problems of type~\eqref{MinMax}, oracle~\eqref{MainSubproblem} reduces to the minimization of a piecewise linear function over $\X$. Therefore, it has the same complexity as the modified LMOs of~\citet{kreimeier2023frank} and, moreover, subproblem~\eqref{MainSubproblem} is convex irrespective of the nature of $\ff$.

\paragraph{Accuracy Certificates.}  The standard \emph{progress metric} of FW algorithms, which Algorithm~\ref{alg:basic} generalizes, is the `Frank-Wolfe gap' or Hearn gap~\citep{hearn1982gap}. For smooth objectives, it is defined as  $\Delta_k = \max_{\yy\in\X} \la \nabla \varphi(\yy_k), \yy_k - \yy \ra$, for each iterate $\yy_k$. This quantity is computed cost-free during the algorithm's iterations and has the desirable property of upper-bounding the suboptimality of the current iterate: $\Delta_k \geq \varphi(\yy_k) - \varphi^{\star}$. 
Notably, its semantics straightforwardly extend to the non-convex setting~\citep{lacoste2016convergence}. Additionally, convergence guarantees on the gap are desirable due to its affine invariance, which aligns with the affine invariance of classical FW algorithm.

Our setting does not permit a direct generalization of the FW gap with all of the above properties. Rather, we introduce the following \emph{accuracy certificate}, which is readily available in each iteration:
\begin{equation}
	\label{gapCertificate}
	\Delta_k \; \Def \;
	\varphi(\yy_k) 
	- F\big( \ff(\yy_k) + \nabla \ff(\yy_k)(\xx_{k + 1} - \yy_k), \xx_{k + 1}\big).
\end{equation}
For minimization of a smooth (not necessarily convex) function, quantity~\eqref{gapCertificate} indeed reduces to the standard FW gap. Moreover, for convex $\varphi(\xx)$ (Assumption~\ref{AssumptionConvexity}) we can conclude that
\beq \label{DeltaKLower}
\ba{rcl}
\Delta_k & \geq & 
\max\limits_{\xx \in \X}\Bigl[ \varphi(\yy_k)
-  F\big(\ff(\yy_k) + \nabla \ff(\yy_k)(\xx - \yy_k), \xx\big) \Bigr] \\[10pt]
& \geq & 
\max\limits_{\xx \in \X}
\Bigl[
\varphi(\yy_k) - F( \ff(\xx), \xx ) \Bigr]
\;\; = \;\; 
\varphi(\yy_k) - \varphi^{\star}.
\ea
\eeq

Hence, for a tolerance $\varepsilon > 0$, the criterion $\Delta_k \leq \varepsilon$ can be used as the stopping condition for our method in convex scenarios. Moreover, the value of $\Delta_k$ can be used for computing the parameter $\gamma_k$ through line search. 

\paragraph{Convergence on Convex Problems.}
In the following, we prove the global convergence of Algorithm~\ref{alg:basic} in case when $\varphi(\xx)$ is convex.

\begin{restatable}{thm-rest}{ThmBasicConvex}
	\label{TheoremBasicConvex}
	Let Assumptions~\ref{AssumptionF}, \ref{AssumptionSmooth},
	and \ref{AssumptionConvexity} be satisfied. 
	Let $\gamma_k \coloneqq \min\{1, \frac{\Delta_k}{\mathcal{S}} \}$
	\textbf{or} $\gamma_k \coloneqq \frac{2}{2 + k}$.
	Then, for $k \geq 1$ it holds that
	\beq \label{ConvergenceConvex}
	\varphi(\yy_k) - \varphi^{\star} \;\; \leq \;\; \frac{2 \mathcal{S}}{1 + k} \;\qquad \;\text { and } \;\qquad\; \min\limits_{1 \leq i \leq k} \Delta_i
	\;\; \leq \;\; 
	\frac{6\mathcal{S}}{k}.
	\eeq
\end{restatable}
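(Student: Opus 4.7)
The plan is to mimic the classical Frank-Wolfe descent argument, but adapted to the fully composite structure, where the quantity $\mathcal{S}$ plays the role of the curvature constant and the subhomogeneity of $F$ is used to control the nonlinear remainder.

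First I would set up notation for one iteration. Let $\uu_k \Def \ff(\yy_k) + \gamma_k \nabla \ff(\yy_k)(\xx_{k+1} - \yy_k)$, and define the ``remainder''
$\vv_k \Def \frac{2}{\gamma_k^2}\bigl[\ff(\yy_{k+1}) - \ff(\yy_k) - \nabla \ff(\yy_k)(\yy_{k+1} - \yy_k)\bigr]$,
so that $\ff(\yy_{k+1}) = \uu_k + \tfrac{\gamma_k^2}{2}\vv_k$. Assumption~\ref{AssumptionSmooth} applied with $\xx = \yy_k$, $\yy = \xx_{k+1}$ gives $F(\vv_k, \yy_{k+1}) \leq \mathcal{S}$. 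Also, since $\yy_{k+1} = (1-\gamma_k)\yy_k + \gamma_k \xx_{k+1}$, we can rewrite $\uu_k = (1-\gamma_k)\ff(\yy_k) + \gamma_k\bigl[\ff(\yy_k) + \nabla\ff(\yy_k)(\xx_{k+1}-\yy_k)\bigr]$.

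Next I would derive the missing structural fact: $F$ is \emph{subadditive} in $\uu$. Note that subhomogeneity with $\gamma \to \infty$ forces $F(\0,\xx) \leq 0$. Then for any $\uu,\vv$,
$F(\uu+\vv,\xx) = F\!\left(2 \cdot \tfrac{\uu+\vv}{2},\xx\right) \leq 2 F\!\left(\tfrac{\uu+\vv}{2},\xx\right) \leq F(\uu,\xx) + F(\vv,\xx)$,
using subhomogeneity with $\gamma=2$ and then convexity in the first argument. Combined with $F(t\vv,\xx) \leq t F(\vv,\xx) + (1-t)F(\0,\xx) \leq t F(\vv,\xx)$ for $t \in (0,1]$, this gives $F\bigl(\uu_k + \tfrac{\gamma_k^2}{2}\vv_k, \yy_{k+1}\bigr) \leq F(\uu_k, \yy_{k+1}) + \tfrac{\gamma_k^2}{2}\mathcal{S}$. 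Joint convexity then yields $F(\uu_k,\yy_{k+1}) \leq (1-\gamma_k)\varphi(\yy_k) + \gamma_k F\bigl(\ff(\yy_k)+\nabla\ff(\yy_k)(\xx_{k+1}-\yy_k),\xx_{k+1}\bigr)$, which by the definition~\eqref{gapCertificate} of $\Delta_k$ equals $\varphi(\yy_k) - \gamma_k\Delta_k$. Thus I obtain the key one-step descent
\[
\varphi(\yy_{k+1}) \;\leq\; \varphi(\yy_k) - \gamma_k \Delta_k + \tfrac{\gamma_k^2}{2}\,\mathcal{S}.
\]

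With this inequality in hand, closing the proof is standard. Setting $a_k \Def \varphi(\yy_k) - \varphi^{\star}$ and using $\Delta_k \geq a_k$ from \eqref{DeltaKLower}, I get $a_{k+1} \leq (1-\gamma_k) a_k + \tfrac{\gamma_k^2}{2}\mathcal{S}$. For the predefined rule $\gamma_k = 2/(k+2)$, an induction shows $a_k \leq 2\mathcal{S}/(k+1)$; the base case $a_1 \leq \mathcal{S}/2$ uses $\gamma_0 = 1$, and the induction step reduces to checking $k^2 + 3k + 1 \leq (k+1)(k+2)$. For the line-search rule $\gamma_k = \min\{1,\Delta_k/\mathcal{S}\}$, the choice is exactly the minimizer of $\gamma \mapsto -\gamma\Delta_k + \tfrac{\gamma^2}{2}\mathcal{S}$ on $(0,1]$, yielding decrease $-\tfrac{\Delta_k^2}{2\mathcal{S}}$ or $-\tfrac{\Delta_k}{2}$ depending on the regime, which gives at least the same rate. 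For the gap certificate bound, I rearrange the descent inequality into $\gamma_k \Delta_k \leq a_k - a_{k+1} + \tfrac{\gamma_k^2}{2}\mathcal{S}$ and sum over a window $k = \lceil k/2\rceil,\dots,k-1$: the telescoping yields $a_{\lceil k/2\rceil} = O(\mathcal{S}/k)$, the sum $\sum \gamma_k^2 = O(1/k)$, and $\sum \gamma_k = \Theta(1)$, so that $\min_{1\leq i\leq k}\Delta_i \leq 6\mathcal{S}/k$ after tracking the constants.

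The main obstacle is the descent inequality itself: the nonlinearity of $\ff$ inside a non-smooth $F$ means that one cannot directly invoke a Taylor/quadratic upper bound as in classical Frank-Wolfe. The workaround — extracting subadditivity of $F$ in its first argument purely from convexity and subhomogeneity, then splitting $\ff(\yy_{k+1})$ into a linearized part plus a residual controlled by $\mathcal{S}$ — is the only nontrivial step. Everything after reduces to the standard $\cO(1/k)$ Frank-Wolfe recursion and a summation trick for the gap bound.
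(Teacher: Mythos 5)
Your overall architecture matches the paper's: establish the one-step descent $\varphi(\yy_{k+1})\leq\varphi(\yy_k)-\gamma_k\Delta_k+\tfrac{\gamma_k^2}{2}\mathcal{S}$, then run the standard Frank--Wolfe recursion for the functional residual and a summation argument for $\min_{1\le i\le k}\Delta_i$; those latter parts are fine. The genuine gap is in your derivation of the descent inequality. Subhomogeneity \eqref{FSubH} applied at $\uu=\0$ gives $F(\0,\xx)\leq\gamma F(\0,\xx)$ for all $\gamma\geq1$, which forces $F(\0,\xx)\geq0$, not $\leq0$ as you claim; for instance $F(\uu,\xx)=u^{(1)}+\psi(\xx)$ with $\psi\geq0$ satisfies Assumption~\ref{AssumptionF} and has $F(\0,\xx)=\psi(\xx)>0$. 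Consequently your step $F(t\vv,\xx)\leq tF(\vv,\xx)+(1-t)F(\0,\xx)\leq tF(\vv,\xx)$ fails for $t\in(0,1)$; in fact subhomogeneity with $\gamma=1/t$ yields the \emph{reverse} inequality $F(t\vv,\xx)\geq tF(\vv,\xx)$. Your two-stage route (subadditivity, then down-scaling the isolated remainder) therefore only delivers $F\bigl(\uu_k+\tfrac{\gamma_k^2}{2}\vv_k,\yy_{k+1}\bigr)\leq F(\uu_k,\yy_{k+1})+\tfrac{\gamma_k^2}{2}\mathcal{S}+\bigl(1-\tfrac{\gamma_k^2}{2}\bigr)F(\0,\yy_{k+1})$, and the extra term does not vanish as $\gamma_k\to0$, which destroys the $\cO(1/k)$ rate.

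The repair is to prove the combined inequality $F(\uu+t\vv,\xx)\leq F(\uu,\xx)+tF(\vv,\xx)$ for all $t\geq0$ in one shot, never isolating $F(t\vv,\xx)$: for $s\in(0,\min\{t,1\})$ write $\uu+t\vv=(1-s)\tfrac{\uu}{1-s}+s\cdot\tfrac{t}{s}\vv$, apply convexity of $F(\cdot,\xx)$, and then subhomogeneity to both pieces (both scaling factors $\tfrac{1}{1-s}$ and $\tfrac{t}{s}$ are at least $1$). This is precisely inequality \eqref{FSubAdd} underlying the paper's Lemma~\ref{LemGlobalBounds}. With that substitution your remainder decomposition $\ff(\yy_{k+1})=\uu_k+\tfrac{\gamma_k^2}{2}\vv_k$, the joint-convexity step identifying $-\gamma_k\Delta_k$, the induction for $\gamma_k=\tfrac{2}{k+2}$, the line-search comparison, and the windowed summation for the gap all go through and coincide with the paper's proof up to bookkeeping of constants.
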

Our method recovers the rate of classical FW methods for smooth problems, while being applicable to the wider class of \emph{fully composite problems}~\eqref{MainProblem}. Thus, our $\mathcal{O}(1/k)$ rate improves upon the $\mathcal{O}(1/\sqrt{k})$ of black-box non-smooth optimization. Clearly, the improvement is achievable by leveraging the \emph{structure of the objective} within the algorithm.

\paragraph{Convergence on Non-convex Problems.}
In this case, $\Delta_k$ has different semantics and no longer provides an accuracy certificate for the functional residual. This quantity is nevertheless important, since it enables us to quantify the algorithm's progress in the non-convex setting, while maintaining an affine-invariant analysis. 
The following theorem states the convergence guarantee on $\Delta_k$ for non-convex problems.

\begin{restatable}{thm-rest}{ThmGap}
	\label{TheoremGap}
	Let Assumptions~\ref{AssumptionF} and \ref{AssumptionSmooth} be satisfied.
	Let $\gamma_k \coloneqq \min\{1, \frac{\Delta_k}{\mathcal{S}} \}$
	\textbf{or} $\gamma_k \coloneqq \frac{1}{\sqrt{1 + k}}$.
	Then, for all $k\geq 1$ it holds that 
	\beq \label{ConvergenceGapGeneral}
	\ba{rcl}
	\min\limits_{0 \leq i \leq k} \Delta_i & \leq & 
	\frac{ \varphi(\yy_0) - \varphi^{\star} + 0.5 \mathcal{S}(1 + \ln(k + 1)) }{\sqrt{k + 1}}.
	\ea
	\eeq
\end{restatable}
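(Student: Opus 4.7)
The plan is to establish a one-step progress inequality
$\varphi(\yy_{k+1}) \leq \varphi(\yy_k) - \gamma_k \Delta_k + \tfrac{\gamma_k^2}{2}\mathcal{S}$
and then telescope it. Let $\bm{\ell}_k := \ff(\yy_k) + \gamma_k \nabla \ff(\yy_k)(\xx_{k+1} - \yy_k)$ be the linearization-predicted value of $\ff$ at $\yy_{k+1}$ and $\rr_k := \ff(\yy_{k+1}) - \bm{\ell}_k$ the curvature remainder. Since $\yy_{k+1} = \yy_k + \gamma_k(\xx_{k+1} - \yy_k)$, Assumption~\ref{AssumptionSmooth} applied with $\xx = \yy_k$, $\yy = \xx_{k+1}$, $\gamma = \gamma_k$ gives $F\bigl(\tfrac{2}{\gamma_k^2}\rr_k, \yy_{k+1}\bigr) \leq \mathcal{S}$. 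The central trick is then the three-factor convex decomposition with $\beta := \gamma_k^2/2 \in (0, \tfrac{1}{2}]$:
$\ff(\yy_{k+1}) = (1-\beta)\tfrac{\bm{\ell}_k}{1-\beta} + \beta \cdot \tfrac{2}{\gamma_k^2}\rr_k$
and the trivial identity $\yy_{k+1} = (1-\beta)\yy_{k+1} + \beta \yy_{k+1}$. Joint convexity of $F$ combined with Assumption~\ref{AssumptionSmooth} yields
$\varphi(\yy_{k+1}) \leq (1-\beta) F\bigl(\tfrac{\bm{\ell}_k}{1-\beta}, \yy_{k+1}\bigr) + \beta \mathcal{S}$,
and subhomogeneity with scaling $1/(1-\beta) \geq 1$ further bounds $(1-\beta)F\bigl(\tfrac{\bm{\ell}_k}{1-\beta}, \yy_{k+1}\bigr) \leq F(\bm{\ell}_k, \yy_{k+1})$.

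To bound $F(\bm{\ell}_k, \yy_{k+1})$, I would observe that $(\bm{\ell}_k, \yy_{k+1})$ is itself the exact convex combination $\gamma_k\bigl(\ff(\yy_k) + \nabla\ff(\yy_k)(\xx_{k+1} - \yy_k), \xx_{k+1}\bigr) + (1-\gamma_k)\bigl(\ff(\yy_k), \yy_k\bigr)$. Joint convexity of $F$ together with the definition of $\Delta_k$ then gives $F(\bm{\ell}_k, \yy_{k+1}) \leq \gamma_k(\varphi(\yy_k) - \Delta_k) + (1-\gamma_k)\varphi(\yy_k) = \varphi(\yy_k) - \gamma_k \Delta_k$. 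Combining with the preceding estimate produces the target one-step inequality.

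For the predetermined rule $\gamma_k = 1/\sqrt{1+k}$, I would telescope to obtain $\sum_{i=0}^k \tfrac{\Delta_i}{\sqrt{1+i}} \leq \varphi(\yy_0) - \varphi^\star + \tfrac{\mathcal{S}}{2}\sum_{i=0}^k \tfrac{1}{1+i} \leq \varphi(\yy_0) - \varphi^\star + \tfrac{\mathcal{S}}{2}\bigl(1 + \ln(k+1)\bigr)$, and then use $\sum_{i=0}^k \tfrac{1}{\sqrt{1+i}} \geq \sqrt{k+1}$ (each summand is at least $1/\sqrt{k+1}$) together with $\sum_i \tfrac{\Delta_i}{\sqrt{1+i}} \geq \min_{0 \leq i \leq k}\Delta_i \cdot \sum_i \tfrac{1}{\sqrt{1+i}}$ to isolate the minimum and conclude. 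For the adaptive rule $\gamma_k = \min\{1, \Delta_k/\mathcal{S}\}$, observe that the one-step right-hand side $-\gamma \Delta_k + \tfrac{\gamma^2}{2}\mathcal{S}$ is minimized over $\gamma \in (0, 1]$ precisely by this choice, so it delivers at least as much per-step decrease as the fixed $\gamma = 1/\sqrt{1+k}$, and the same telescoped bound carries through. The main subtlety I anticipate is identifying the three-factor rescaling at the outset: the factor $2/\gamma_k^2$ hardwired into Assumption~\ref{AssumptionSmooth} must be matched to the convex weight $\beta = \gamma_k^2/2$ so that their product equals $1$, restoring $\rr_k$ exactly inside $\ff(\yy_{k+1})$; once this calibration is seen, the remaining manipulations reduce to routine applications of joint convexity and subhomogeneity.
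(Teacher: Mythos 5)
Your proposal is correct and follows essentially the same route as the paper: the one-step inequality $\varphi(\yy_{k+1}) \leq \varphi(\yy_k) - \gamma_k \Delta_k + \tfrac{\gamma_k^2}{2}\mathcal{S}$ is exactly the paper's key bound \eqref{GapKey} (obtained there from Lemma~\ref{LemGlobalBounds} plus joint convexity), and the telescoping with $\gamma_i = 1/\sqrt{1+i}$, the harmonic-sum logarithm, and the observation that the adaptive stepsize only improves the per-step decrease are all identical to the paper's argument. Your three-factor convex decomposition with weight $\beta = \gamma_k^2/2$ simply re-derives inline the subadditivity inequality \eqref{FSubAdd} that the paper imports from prior work, so this is a presentational rather than substantive difference.
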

Theorem~\ref{TheoremGap} recovers a similar rate to the classical FW methods~\citep{lacoste2016convergence}. The line search rule for parameter $\gamma_k$ makes our method universal, thereby allowing us to attain practically faster rates automatically when the iterates lie within a \textit{convex region} of the objective.

As previously mentioned, the progress measure~\eqref{gapCertificate} does not upper-bound functional suboptimality in the general non-convex setting. However, 
in some cases, we may still be able to establish convergence of meaningful quantities 
for non-convex fully composite problems with the linearization method.
Namely, let us consider problem~\eqref{GSProblem} in Example~\ref{ExampleNorm} for the Euclidean norm, i.e., $F(\uu, \xx) = \norm{\uu}$,
and the following simple iterations:
\beq \label{NormBasicIters}
\ba{rcl}
\yy_{k + 1} & \in & \Argmin\limits_{\yy \in \yy_k + \gamma_k(\X - \yy_k)}
\| \ff(\yy_k) + \nabla \ff(\yy_k) (\yy - \yy_k) \|.
\ea
\eeq
 Note that in~\eqref{NormBasicIters}, differently from~\eqref{GSAlgEx}, the value of $\gamma_k$ is selected prior to the oracle call. Denoting the squared objective as
$\Phi(\xx) \Def \frac{1}{2}\bigl[ \varphi(\xx) \bigr]^2 = \frac{1}{2} \| \ff(\xx) \|^2$
and following our analysis, we can state the convergence of process \eqref{NormBasicIters}
in terms of the classical FW gap with respect to $\Phi$.
The proof is deferred to Appendix~\ref{app:proof-prop-3-1}.

\begin{restatable}{prop-rest}{PropNorm}\label{PropNorm}
	Let $\gamma_k \coloneqq \frac{1}{\sqrt{1 + k}}$. Then, 
	for the iterations \eqref{NormBasicIters}, under Assumption~\ref{AssumptionLipCont} and for all $k \geq 1$, it holds that
	$$
	\ba{rcl}
	\min\limits_{0 \leq i \leq k}
	\max\limits_{\yy \in \X }\la \nabla \Phi(\yy_i), \yy_i - \yy \ra
	& \leq & 
	\cO\bigl(\frac{\ln (k)}{\sqrt{k}}\bigr).
	\ea
	$$	
\end{restatable}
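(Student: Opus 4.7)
The plan is to derive a descent-type inequality for $\Phi(\yy) = \tfrac{1}{2}\|\ff(\yy)\|^2$ of the form $\Phi(\yy_{k+1}) \leq \Phi(\yy_k) - \gamma_k G_k + C\gamma_k^2$, where $G_k \Def \max_{\yy\in\X}\la\nabla\Phi(\yy_k), \yy_k - \yy\ra$ is the vanilla Frank--Wolfe gap of $\Phi$ at $\yy_k$, and then to telescope using the fact that $\gamma_k = 1/\sqrt{1+k}$ is square-summable with divergent sum.

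First I would reparametrize~\eqref{NormBasicIters} as $\yy_{k+1} = (1-\gamma_k)\yy_k + \gamma_k \xx_{k+1}$, where $\xx_{k+1} \in \Argmin_{\xx \in \X}\|\ff(\yy_k) + \gamma_k\nabla\ff(\yy_k)(\xx - \yy_k)\|$. Using Assumption~\ref{AssumptionLipCont} coordinate-wise, Taylor's theorem yields a second-order remainder bound $\|\rr_k\| \leq \tfrac{1}{2}\|\mat{L}\|\gamma_k^2\diam^2$ for $\rr_k \Def \ff(\yy_{k+1}) - \ff(\yy_k) - \gamma_k\nabla\ff(\yy_k)(\xx_{k+1} - \yy_k)$. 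Since $\X$ is compact and each $\nabla f_i$ is Lipschitz on $\X$, both $\sup_{\X}\|\ff\|$ and $\sup_{\X}\|\nabla\ff\|_{\rm op}$ are bounded. Squaring the triangle inequality and controlling the two cross-terms involving $\rr_k$ by Cauchy--Schwarz then gives
\begin{equation*}
2\Phi(\yy_{k+1}) \;\leq\; \|\ff(\yy_k) + \gamma_k\nabla\ff(\yy_k)(\xx_{k+1} - \yy_k)\|^2 + C_1\gamma_k^2.
\end{equation*}

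The key step is to exploit the optimality of $\xx_{k+1}$. Since norms are nonnegative, minimizing a norm also minimizes its square, so for any $\yy \in \X$,
\begin{equation*}
\|\ff(\yy_k) + \gamma_k\nabla\ff(\yy_k)(\xx_{k+1} - \yy_k)\|^2 \;\leq\; \|\ff(\yy_k) + \gamma_k\nabla\ff(\yy_k)(\yy - \yy_k)\|^2.
\end{equation*}
Expanding the right-hand side and invoking the identity $\nabla\Phi(\xx) = \nabla\ff(\xx)^\top\ff(\xx)$ produces $2\Phi(\yy_k) + 2\gamma_k\la\nabla\Phi(\yy_k), \yy - \yy_k\ra + \gamma_k^2\|\nabla\ff(\yy_k)(\yy - \yy_k)\|^2$. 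Choosing $\yy$ to attain the maximum defining $G_k$ converts the linear term into $-2\gamma_k G_k$, while the quadratic term is $\cO(\gamma_k^2)$ by compactness of $\X$. Combining the two displays yields the desired descent $\Phi(\yy_{k+1}) \leq \Phi(\yy_k) - \gamma_k G_k + C\gamma_k^2$ with $C$ depending only on $\diam$, $\|\mat{L}\|$, and the sup-norms of $\ff$ and $\nabla\ff$ on $\X$.

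Telescoping from $i = 0$ to $k$ and using $\Phi \geq 0$ and $G_i \geq 0$ gives $(\min_{0\leq i\leq k} G_i)\sum_{i=0}^k \gamma_i \leq \Phi(\yy_0) + C\sum_{i=0}^k \gamma_i^2$. Standard integral comparisons for $\gamma_i = 1/\sqrt{1+i}$ yield $\sum_{i=0}^k \gamma_i \geq \sqrt{k+1}$ and $\sum_{i=0}^k \gamma_i^2 \leq 1 + \ln(k+1)$, hence $\min_{0\leq i\leq k} G_i = \cO((\ln k)/\sqrt{k})$, as claimed. The main obstacle is conceptual rather than computational: $\xx_{k+1}$ minimizes a norm, not the linear form $\la\nabla\Phi(\yy_k), \cdot\ra$ entering the FW gap; the bridge between the two is the observation that the square of the norm linearizes into exactly that form at leading order in $\gamma_k$, with a discrepancy of order $\gamma_k^2$ that the diminishing stepsize schedule can afford.
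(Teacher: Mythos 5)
Your proof is correct, but it takes a genuinely different route from the paper's. The paper works with the norm itself, $\varphi = \|\ff\|_2$: it applies Lemma~\ref{LemGlobalBounds} to get a one-step bound on $\varphi(\yy_{k+1})$, then uses convexity of the linearized model $g(\xx) = \|\ff(\yy_k) + \gamma_k\nabla\ff(\yy_k)(\xx-\yy_k)\|_2$ together with the subgradient optimality condition at $\xx_{k+1}$, and finally perturbs the subgradient direction $\nabla\ff(\yy_k)^{\top}\ff_{k+1}/\|\ff_{k+1}\|_2$ back to $\nabla\Phi(\yy_k) = \nabla\ff(\yy_k)^{\top}\ff(\yy_k)$ at the cost of an $\cO(\gamma_k^2)$ error and a $1/(\mathcal{F}+\mathcal{G}\diam)$ scaling of the gap. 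This forces the paper to handle the non-differentiability of the norm at the origin (via a somewhat ad hoc shifting remark). You instead work with $\Phi = \tfrac12\|\ff\|^2$ throughout: the single observation that minimizing a norm is equivalent to minimizing its square lets you expand the square exactly, so the FW gap of $\Phi$ appears directly as the linear term with unit coefficient, no subgradients are needed, and the kink at $\ff = \0$ never arises. Your descent inequality $\Phi(\yy_{k+1}) \leq \Phi(\yy_k) - \gamma_k G_k + C\gamma_k^2$ is cleaner than the paper's analogue for $\varphi$; the telescoping step and the estimates $\sum_{i=0}^{k}\gamma_i \gtrsim \sqrt{k+1}$, $\sum_{i=0}^{k}\gamma_i^2 \leq 1+\ln(k+1)$ are then identical in spirit to the end of the proof of Theorem~\ref{TheoremGap}, which is also how the paper concludes. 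Both arguments rely on the same ingredients (compactness, Assumption~\ref{AssumptionLipCont}, the optimality of the oracle step), so neither is more general, but yours is the more elementary and self-contained of the two.
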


We further show in Appendix~\ref{app-section-interp-delta-nonconvex} that $\Delta_k$ can be related to the classical FW gap, when our iterates lie in a smooth region of $F$. Whether we can provide a meaningful interpretation of $\Delta_k$ in the general non-convex case, however, remains an interesting open question.

\section{The Accelerated Method}
\label{SectionAccelerated}

We now move away from the affine-invariant formulation of Algorithm~\ref{alg:basic} to a setting in which, by considering regularized minimization subproblems along with convexity and Lipschitz continuity of gradients, we can \emph{accelerate} the Basic Method. We achieve acceleration by resorting to the well-known three-point scheme of~\citet{nesterov1983method},
in which the proximal subproblem is solved inexactly via calls to oracles of type~\eqref{MainSubproblem}. This approach was first analyzed in the context of FW methods by~\citet{lan2016conditional}.

We propose Algorithm~\ref{alg:fw-fully-comp} which consists of a two-level scheme: an outer-loop computing the values of three iterates $\yy$, $\xx$ and $\zz$ in $\X$, and a subsolver computing inexact solutions to the \textit{proximal subproblem}
\beq \label{ProxPoint}
\ba{rcl}
\Argmin\limits_{\uu \in \X}
\Bigl\{ P(\uu) \Def
F( \ff(\zz) + \nabla \ff(\zz)(\uu - \zz), \uu )
+ \frac{\beta}{2}\| \uu - \xx \|_2^2, \;\; \beta > 0
\Bigr\}.
\ea
\eeq
Note that the minimization in~\eqref{ProxPoint} does not conform to our oracle model~\eqref{MainSubproblem} due to the quadratic regularizer. However, we can approximate its solution by iteratively solving subproblems in which we linearize the squared norm to match the template of \eqref{MainSubproblem}. This procedure, denoted as $\text{InexactProx}$ in Algorithm~\ref{alg:fw-fully-comp}, returns a point $\uu^{+}$ satisfying the optimality condition \emph{$\eta$-inexactly} for some $\eta > 0$:
\begin{align} \label{InexactProxGuarantee}
	&F(\ff(\zz) + \nabla f(\zz)(\uu^{+} - \zz), \uu^{+})
	+ \beta \la \uu^{+} - \xx, \uu^{+}\ra \nn\\[10pt]
	& \hspace{25mm} \leq
	F(\ff(\zz) + \nabla f(\zz)(\uu - \zz), \uu)
	+ \beta\la \uu^{+} - \xx, \uu \ra + \eta, \qquad \forall \uu \in \X.
\end{align}
\begin{algorithm}[ht!]
	\setstretch{1.2}
	\begin{algorithmic}
		\STATE \textbf{Input:} $\yy_0 \in \X$, set $\xx_0 = \yy_0$ 
		\FOR{$k = 0, 1, \dots $}
		\STATE Choose $\gamma_k \in (0, 1]$
		\STATE Set $\zz_{k+1} = (1 - \gamma_k) \yy_{k} + \gamma_k \xx_{k}$
		\STATE Compute $\xx_{k+1} = \text{InexactProx}(\xx_{k}, \zz_{k+1}, \beta_k, \eta_k)$
		for some $\beta_k \geq 0$ and $\eta_k \geq 0$
		\STATE Set $\yy_{k+1} = (1 - \gamma_k) \yy_{k} + \gamma_k \xx_{k+1}$
		\ENDFOR
	\end{algorithmic}
	\caption{Accelerated Method}
	\label{alg:fw-fully-comp}
\end{algorithm}
Note that condition~\eqref{InexactProxGuarantee} implies $P(\uu^+) \leq P(\uu) + \eta, \; \forall \uu \in \X$.
Formally, the main convergence result characterizing Algorithm~\ref{alg:fw-fully-comp} is the following.
\begin{restatable}{thm-rest}{ThmAcceleration}
	\label{TheoremAcceleration}
	Let Assumptions~\ref{AssumptionF}, ~\ref{AssumptionLipCont}, and~\ref{AssumptionConvexity} be satisfied. We choose $\gamma_k\coloneqq\frac{3}{k+3}$, $\beta_k\coloneqq cF(\mat{L})\gamma_k$ and $\eta_k \coloneqq \frac{\delta}{3(k+1)(k+2)}$ where $\delta > 0$ and $c \geq 0$ are chosen constants, and $F(\mat{L}) \coloneqq \sup_{\xx \in \X} F(\mat{L}, \xx)$. Then, for all $k \geq 1$ it holds that
	\begin{equation*}
		\ba{rcl}
		\varphi(\yy_k) - \varphi^{\star} & \leq & \frac{\delta + 8cF(\mat{L})\diam^2}{(k+2)(k+3)} +  \frac{2\max\{0, 1-c\}F(\mat{L})\diam^2 }{k+3}.
		\ea
	\end{equation*}    
\end{restatable}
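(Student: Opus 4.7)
The plan is to combine a Nesterov-style estimate-sequence argument with the $\eta_k$-inexactness~\eqref{InexactProxGuarantee} of the proximal subsolver. Using joint convexity, monotonicity, and subhomogeneity of $F$ together with the componentwise descent lemma for $\ff$ (Assumption~\ref{AssumptionLipCont}), I aim to derive a one-step recursion of the form $\varphi(\yy_{k+1}) - \varphi^{\star} \leq (1-\gamma_k)(\varphi(\yy_k) - \varphi^{\star}) + \frac{\gamma_k\beta_k}{2}(\|\xx_k-\xopt\|^2 - \|\xx_{k+1}-\xopt\|^2) + \gamma_k\eta_k + \frac{\gamma_k^2 F(\mat L)\max\{0,1-c\}}{2}\diam^2$, and then telescope it with cubic weights $A_k$ of the form $k(k+1)(k+2)/6$, chosen so that $A_{k+1}(1-\gamma_k)=A_k$.

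For the per-step descent, first apply the componentwise descent inequality on the segment from $\zz_{k+1}$ to $\yy_{k+1}$, using $\yy_{k+1}-\zz_{k+1}=\gamma_k(\xx_{k+1}-\xx_k)$, and pass to $\varphi$ through monotonicity of $F$. Two auxiliary facts derived from Assumption~\ref{AssumptionF} help here: (i) $F(0,\xx)\leq 0$ (by taking $\gamma\to\infty$ in subhomogeneity), combined with convexity, yields the scaling $F(\alpha\mat L,\xx)\leq\alpha F(\mat L,\xx)$ for $\alpha\in[0,1]$; and (ii) writing $u_1+u_2 = \frac{1}{2}(2u_1)+\frac{1}{2}(2u_2)$ and invoking subhomogeneity on each doubled piece yields subadditivity $F(u_1+u_2,\xx)\leq F(u_1,\xx)+F(u_2,\xx)$. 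These facts let me separate the quadratic slack $\frac{\gamma_k^2}{2}\|\xx_{k+1}-\xx_k\|^2\,\mat L$ and bound it by $\frac{\gamma_k^2 F(\mat L)}{2}\|\xx_{k+1}-\xx_k\|^2$. Next, decompose the remaining linearization along $\yy_{k+1}=(1-\gamma_k)\yy_k+\gamma_k\xx_{k+1}$ via joint convexity of $F$; the $(1-\gamma_k)$-piece is bounded by $(1-\gamma_k)\varphi(\yy_k)$ using convexity of each $f_i$ plus monotonicity of $F$ (Assumption~\ref{AssumptionConvexity}).

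For the $\gamma_k$-piece, invoke~\eqref{InexactProxGuarantee} with test point $\xopt$, apply the three-point identity $\langle a-b,a-c\rangle = \frac{1}{2}(\|a-b\|^2+\|a-c\|^2-\|b-c\|^2)$, and use $F(\ff(\zz_{k+1})+\nabla\ff(\zz_{k+1})(\xopt-\zz_{k+1}),\xopt)\leq\varphi^{\star}$. Combining with the smoothness-based upper bound, the $\|\xx_{k+1}-\xx_k\|^2$ terms carry net coefficient $\frac{\gamma_k^2 F(\mat L)(1-c)}{2}$, which is $\leq 0$ when $c\geq 1$ and is otherwise bounded via $\|\xx_{k+1}-\xx_k\|^2\leq\diam^2$. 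This is precisely where the $\max\{0,1-c\}$ factor and the specific choice $\beta_k=cF(\mat L)\gamma_k$ play their role.

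The main obstacle is telescoping the squared-distance sum, because under this schedule $A_{k+1}\gamma_k\beta_k = cF(\mat L)A_{k+1}\gamma_k^2$ is \emph{not} constant in $k$. I would resolve this by Abel summation: setting $C_{k+1}:=A_{k+1}\gamma_k\beta_k/2$, the quantity $A_{k+1}\gamma_k^2$ is monotone non-decreasing in $k$ (explicitly, $3(k+1)(k+2)/[2(k+3)]$), so $\sum_{k=0}^{K-1} C_{k+1}(\|\xx_k-\xopt\|^2-\|\xx_{k+1}-\xopt\|^2)\leq C_K\diam^2$. The $\eta_k$-budget telescopes cleanly because $A_{k+1}\gamma_k\eta_k$ is constant under the chosen schedule, yielding a sum proportional to $K\delta$. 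The $(1-c)_+$ residual is handled similarly by direct summation of $A_{k+1}\gamma_k^2$. Dividing the telescoped inequality by $A_K$ produces the claimed $\frac{O(\delta+cF(\mat L)\diam^2)}{k^2}+\frac{O(\max\{0,1-c\})F(\mat L)\diam^2}{k}$ rate, matching the theorem up to routine constant bookkeeping.
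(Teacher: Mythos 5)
Your proposal follows the paper's own proof essentially step for step: the componentwise descent lemma pushed through $F$ via monotonicity and subadditivity, the joint-convexity split along $\yy_{k+1}=(1-\gamma_k)\yy_k+\gamma_k\xx_{k+1}$, the inexact-prox condition~\eqref{InexactProxGuarantee} tested at $\xopt$ with the three-point identity, the resulting recursion, and the cubic weights $A_{k+1}\propto(k+1)(k+2)(k+3)$ with an Abel-summation bound on the weighted squared-distance telescope (your monotonicity of $C_{k+1}=A_{k+1}\gamma_k\beta_k/2$ is exactly the paper's bound on $a_{i+1}\beta_i-a_i\beta_{i-1}$). The constants you obtain are consistent with the theorem up to the bookkeeping you acknowledge.

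There is, however, one concrete error in your auxiliary fact (i). Subhomogeneity~\eqref{FSubH} with $\uu=0$ gives $F(0,\xx)\leq\gamma F(0,\xx)$ for all $\gamma\geq 1$, which forces $F(0,\xx)\geq 0$, not $\leq 0$; likewise, substituting $\ww=\alpha\uu$ and $\gamma=1/\alpha$ yields $F(\alpha\uu,\xx)\geq\alpha F(\uu,\xx)$ for $\alpha\in(0,1]$ — the \emph{reverse} of the scaling you claim. (A counterexample: $F(\uu,\xx)=u^{(1)}+\psi(\xx)$ with $\psi>0$ is subhomogeneous but has $F(0,\xx)>0$.) Consequently your two-stage route — first subadditivity $F(\uu_1+\uu_2,\xx)\leq F(\uu_1,\xx)+F(\uu_2,\xx)$ (which is fine), then $F(s\mat{L},\xx)\leq sF(\mat{L},\xx)$ for the scalar $s=\tfrac12\|\yy_{k+1}-\zz_{k+1}\|^2$ — breaks down whenever $s<1$. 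The step you actually need is the one-shot inequality $F(\uu+t\vv,\xx)\leq F(\uu,\xx)+tF(\vv,\xx)$ for all $t\geq 0$ (inequality~\eqref{FSubAdd} in the paper, equivalent to subhomogeneity): it is proved by writing $\uu+t\vv=(1-\lambda)\frac{\uu}{1-\lambda}+\lambda\frac{t}{\lambda}\vv$ with $\lambda\leq\min\{t,1\}$ so that subhomogeneity is only ever applied with a scaling factor $\geq 1$. With that substitution your descent step, and hence the whole argument, goes through and coincides with the paper's proof.
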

The proof of Theorem~\ref{TheoremAcceleration} comes from a natural sequence of steps involving the properties of the operators and the approximate optimality of $\xx_{k+1}$. The crucial step in attaining the improved convergence is the choice of parameters $\gamma_k$, $\beta_k$ and $\eta_k$. Notably, the decay speed required of $\eta_k$ is quadratic, meaning that the subproblems are solved with fast-increasing accuracy and at the cost of additional time spent in the subsolver. The constant $\delta$ allows us to fine-tune the accuracy required for the first several iterations of the algorithm, where we can demand a lower accuracy. In practice, we can always choose $\delta = 1$ as a universal rule,
and the optimal choice is $\delta = F(\mat{L}) \diam^2$ when these parameters are known.
The factor $c F(\mat{L})$ in the definition of $\beta_k$ can be interpreted as the quality of the approximation of the Lipschitz constant for our problem.  Namely, it is exactly computed for $c = 1$, and over or underestimated for $c > 1$ and $c \in (0,1)$ respectively. 

We describe each of the bounding terms independently: the first is highly reminiscent of the usual bounds accompanying FW-type algorithms in terms of constants, albeit now with quadratic decay speed. The second term indicates the behavior of the algorithm as a function of $c$: overestimation of $F(\mat{L})$ ensures quadratic rates of convergence, since the second term becomes negative.  Conversely, underestimation of $F(\mat{L})$ brings us back into the familiar FW convergence regime of $\bigO{1/k}$ as the second term becomes positive. The extreme case $c=0$ (and hence $\beta_k = 0$) essentially reduces Algorithm~\ref{alg:fw-fully-comp} to Algorithm~\ref{alg:basic}, since the projection subproblem reduces to problem~\eqref{MainSubproblem} which we assume to be easily solvable. We therefore have robustness in terms of choosing  the parameter $c$ and the exact knowledge of $F(\mat{L})$ is not needed, even though it may come at the cost of a slower convergence. In contrast, classical Fast Gradient Methods are usually very sensitive to such parameter choices~\citep{devolder2013exactness}.

Theorem~\ref{TheoremAcceleration} provides an accelerated rate on the iterates $\yy_k$ -- an analogous result to that of~\citet{lan2016conditional} albeit under a different oracle. This convergence rate is conditioned on the subsolver returning an $\eta_k$-inexact solution to the projection subproblem and therefore any subsolver satisfying the condition can achieve this rate. As with any optimization algorithm, convergence guarantees may also be stated in terms of the oracle complexity required to reach $\epsilon$ accuracy. For Algorithm~\ref{alg:fw-fully-comp} all the oracle calls are deferred to the subsolver $\text{InexactProx}$, which we describe and analyze in the next section.

\section{Solving the Proximal Subproblem}
\label{SectionInexactProx}
We now provide an instance of the $\text{InexactProx}$ subsolver which fully determines the oracle complexity of the Accelerated Method (Algorithm~\ref{alg:fw-fully-comp}). It relies on a specific adaptation of Algorithm~\ref{alg:basic} to the structure of~\eqref{ProxPoint}. The quadratic regularizer is linearized and oracles of type~\eqref{MainSubproblem} are called once per inner iteration, while the Jacobian $\nabla \ff(\zz_k)$ is computed once per subsolver call. The main challenge here is to find a readily available quantity defining the exit condition of the subsolver, which we denote by $\Delta_t$.
\begin{algorithm}[ht!]
	\setstretch{1.3}
	\begin{algorithmic}
		\STATE \textbf{Initialization:} $\uu_0 = \xx$.
		\FOR{$t = 0, 1, \dots $}
		\STATE Compute $\displaystyle \begin{aligned}[t]
			\vv_{t + 1} & \in & \Argmin\limits_{\vv \in \X}
			\Bigl\{ 
			F\bigl(  \ff(\zz) + \nabla \ff(\zz) (\vv - \zz), \, \vv \bigr)
			\, + \, \beta \la \uu_t - \xx, \vv  \ra 
			\Bigr\}\\[2mm]
		\end{aligned}$
		\STATE Compute $ \displaystyle\begin{aligned}[t]
			\displaystyle \Delta_{t} \;\; &= \;\; 
			F\bigl(  \ff(\zz) + \nabla \ff(\zz) (\uu_t - \zz), \, \uu_t \bigr)  \; - \;
			F\bigl(  \ff(\zz) + \nabla \ff(\zz) (\vv_{t + 1} - \zz), \, \vv_{t + 1} \bigr) \; \\[-1mm]
			&\hspace{75mm}+ \; \beta \la \uu_t  - \xx, \uu_t - \vv_{t + 1} \ra\end{aligned} \) 
		\STATE {\textbf{if } $\Delta_t \leq \eta$ \textbf{ then return } $\uu_t$}
		\STATE Set $\alpha_t = \min\Bigl\{ 1, \, \frac{\Delta_t}{\beta \| \vv_{t + 1} - \uu_t \|_2^2} \Bigr\}\;\;\;$
		and $\;\; \;\uu_{t + 1} = \alpha_t \vv_{t + 1} + (1 - \alpha_t) \uu_t$
		\ENDFOR
	\end{algorithmic}
	\caption{InexactProx($\xx$, $\zz$, $\beta$, $\eta$)
	}
	\label{alg:InexactProx}
\end{algorithm}

The parameters of Algorithm~\ref{alg:InexactProx} are fully specified, and the stopping condition depends on $\Delta_t \geq P(\uu_t) - P^{\star}$, which is a meaningful progress measure. The algorithm selects its stepsize via closed-form line search to improve practical performance. When $F(\uu) \equiv \uu^{(1)}$, this procedure recovers the classical FW algorithm with line search applied to problem~\eqref{ProxPoint}.

We prove two results in relation to Algorithm~\ref{alg:InexactProx}: its convergence rate and the total oracle complexity of Algorithm~\ref{alg:fw-fully-comp} when using Algorithm~\ref{alg:InexactProx} as the subsolver. The rate and analysis are similar to the ones for the Basic Method,
utilizing additionally the form of the proximal subproblem.
\begin{restatable}{thm-rest}{ThmSubsolver}
	\label{TheoremSubsolver}
	Let Assumptions~\ref{AssumptionF}, ~\ref{AssumptionLipCont}, and~\ref{AssumptionConvexity} be satisfied. 
	Then, for all $t \geq 1$ it holds that
	$$
	\ba{c}
	P(\uu_t) - P^{\star} \;\; \leq  \;\; \frac{2\beta\diam^2}{t+1}  \qquad \text{ and } \qquad 
	\min\limits_{1 \leq i \leq t} \Delta_t \;\; \leq  \;\; \frac{6 \beta \diam^2 }{t}.
	\ea
	$$
	Consequently, Algorithm~\ref{alg:InexactProx} returns an $\eta$-approximate solution according to condition~\eqref{InexactProxGuarantee} after at most $\cO\Bigl( \frac{\beta \diam^2}{\eta}  \Bigr)$	iterations.
\end{restatable}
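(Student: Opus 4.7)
}
The plan is to recognize that the proximal subproblem~\eqref{ProxPoint} is itself a fully composite problem with the \emph{affine} inner mapping $\tilde{\ff}(\uu) \coloneqq \ff(\zz) + \nabla \ff(\zz)(\uu - \zz)$ and the outer function $\tilde F(\uu, \vv) \coloneqq F(\uu, \vv) + \tfrac{\beta}{2}\|\vv - \xx\|_2^2$. Since $\tilde{\ff}$ is linear, it contributes zero curvature; all the ``bounded curvature'' is supplied by the quadratic regularizer. The subproblem solved inside Algorithm~\ref{alg:InexactProx} linearizes only this quadratic part (via the gradient $\beta(\uu_t - \xx)$), while keeping $F$ and $\tilde{\ff}$ intact. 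This reveals Algorithm~\ref{alg:InexactProx} as the Basic Method applied to the regularized composite problem, and essentially the same analysis as Theorem~\ref{TheoremBasicConvex} should apply with $\mathcal{S}$ replaced by $\beta \diam^2$.

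The first step is to show that $\Delta_t$ is a valid accuracy certificate, i.e.\ $\Delta_t \geq P(\uu_t) - P^{\star}$ for every $t\geq 0$. By the optimality of $\vv_{t+1}$ for the linearized-regularizer subproblem, for any $\uu \in \X$,
\[
F(\tilde \ff(\vv_{t+1}), \vv_{t+1}) + \beta\la \uu_t - \xx, \vv_{t+1}\ra \;\leq\; F(\tilde \ff(\uu), \uu) + \beta\la \uu_t - \xx, \uu\ra,
\]
and combining this with the first-order (gradient) inequality $\tfrac{\beta}{2}\|\uu_t - \xx\|^2 - \tfrac{\beta}{2}\|\uu - \xx\|^2 \leq \beta\la \uu_t - \xx, \uu_t - \uu\ra$ that comes from convexity of the regularizer yields exactly $\Delta_t \geq P(\uu_t) - P(\uu)$. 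Maximizing in $\uu$ gives the desired lower bound.

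The second step is the descent lemma. Using the affinity of $\tilde\ff$ jointly with the convexity of $F$ in both of its arguments (Assumption~\ref{AssumptionF}), and expanding $\tfrac{\beta}{2}\|\uu_{t+1} - \xx\|^2$ with $\uu_{t+1} = (1-\alpha_t)\uu_t + \alpha_t \vv_{t+1}$, one obtains the identity-level bound
\[
P(\uu_{t+1}) \;\leq\; P(\uu_t) - \alpha_t \Delta_t + \tfrac{\alpha_t^2 \beta}{2}\|\vv_{t+1} - \uu_t\|_2^2.
\]
Here the linear terms collapse precisely into $\Delta_t$ because the step direction $\uu_t - \vv_{t+1}$ appears symmetrically in the model evaluation and in the quadratic cross-term. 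Using $\|\vv_{t+1} - \uu_t\|_2 \leq \diam$ and plugging in the closed-form line-search choice $\alpha_t = \min\{1, \Delta_t/(\beta\|\vv_{t+1}-\uu_t\|_2^2)\}$ minimizes the right-hand side of this inequality. A short case distinction (the interior case yields the quadratic contraction $h_{t+1}\leq h_t - h_t^2/(2\beta\diam^2)$ with $h_t \coloneqq P(\uu_t)-P^\star$, while the boundary case $\alpha_t = 1$ gives $h_{t+1}\leq h_t/2$, using $\Delta_t\geq h_t$) then feeds into the standard Frank–Wolfe induction to yield $h_t \leq 2\beta\diam^2/(t+1)$.

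The third step is to pass from the functional-residual bound to the stated $\min_{1 \leq i \leq t}\Delta_i \leq 6\beta\diam^2/t$ bound. Summing the per-iteration descent $\alpha_i \Delta_i - \tfrac{\alpha_i^2 \beta}{2}\diam^2 \leq h_i - h_{i+1}$ over $i=1,\dots,t$, combining the residual bound from the previous step with the worst-case identity $\alpha_i \Delta_i - \tfrac{\alpha_i^2\beta\diam^2}{2} \geq \tfrac12 \min\{\Delta_i, \Delta_i^2/(\beta \diam^2)\}$, and telescoping, one recovers the stated $\mathcal O(\beta\diam^2/t)$ rate on $\min_i \Delta_i$, mirroring exactly the derivation used in Theorem~\ref{TheoremBasicConvex}. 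The complexity consequence is then immediate: as soon as $6\beta\diam^2/t \leq \eta$, i.e., after $t = \mathcal O(\beta\diam^2/\eta)$ iterations, some $\Delta_i \leq \eta$ and the algorithm terminates; the inequality $\Delta_t \geq P(\uu_t) - P^\star$ established in the first step combined with the optimality characterization used there also guarantees that the returned $\uu^+$ satisfies the inexactness condition~\eqref{InexactProxGuarantee}.

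The main obstacle I anticipate is not any single inequality but the careful bookkeeping in the descent lemma: getting all three terms (the $F$-part at $\uu_t$, the $F$-part at $\vv_{t+1}$, and the $\beta\la \uu_t - \xx,\, \uu_t - \vv_{t+1}\ra$ cross-term) to align into exactly $\Delta_t$ as defined in the algorithm relies on the affinity of $\tilde\ff$ and on the specific form of the regularizer, and is the place where a misstep could break the analogy with the Basic Method.
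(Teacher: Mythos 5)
Your overall strategy is the paper's own: view Algorithm~\ref{alg:InexactProx} as the Basic Method applied to the regularized, inner-linearized model $s(\uu) = r(\uu) + h(\uu)$ with $r(\uu) = \frac{\beta}{2}\|\uu-\xx\|^2$ and $h(\uu) = F(\ff(\zz)+\nabla\ff(\zz)(\uu-\zz),\uu)$; establish $\Delta_t \geq P(\uu_t)-P(\uu)$ from the optimality of $\vv_{t+1}$ plus convexity of the quadratic; and prove the descent inequality $P(\uu_{t+1}) \leq P(\uu_t) - \alpha_t\Delta_t + \frac{\beta\alpha_t^2}{2}\|\vv_{t+1}-\uu_t\|^2$. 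These two steps are correct and match the paper's \eqref{ProofThSubLB} and \eqref{ProofThSubStep1}. For the functional residual, the paper compares the line-search step against the fixed schedule $\rho_t = 2/(2+t)$ and telescopes with weights $A_t=t(t+1)$ rather than doing your case distinction; your route also works, but in the boundary case you should bound the decrease by $\Delta_t - \frac{\beta}{2}\diam^2 \geq h_t - \frac{\beta}{2}\diam^2$ rather than by $h_t/2$: the bound $h_1\leq h_0/2$ alone does not give the base case $h_1\leq\beta\diam^2$ when $h_0$ is large, whereas $h_1\leq\frac{\beta}{2}\diam^2$ does.

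The genuine gap is in your third step. Summing the uniform-weight descent inequality and lower-bounding each per-step decrease by $\frac12\min\{\Delta_i,\,\Delta_i^2/(\beta\diam^2)\}$ only yields $\sum_{i=1}^t \min\{\Delta_i,\Delta_i^2/(\beta\diam^2)\} \leq 2h_1 = \cO(\beta\diam^2)$, hence $\min_{1\leq i\leq t}\Delta_i = \cO(\beta\diam^2/\sqrt{t})$ --- a full square root short of the claimed $6\beta\diam^2/t$. This uniform summation is precisely the non-convex argument of Theorem~\ref{TheoremGap}; it cannot do better because it never exploits the fact that $s(\uu_i)-s^\star$ itself decays. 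The derivation of Theorem~\ref{TheoremBasicConvex} that you invoke is a different one: it multiplies the step-$i$ inequality by the growing weight $a_{i+1}=2(i+1)$ to get $a_{i+1}\Delta_i \leq a_{i+1}[s(\uu_i)-s^\star] + A_i[s(\uu_i)-s^\star] - A_{i+1}[s(\uu_{i+1})-s^\star] + \frac{a_{i+1}^2}{A_{i+1}}\frac{\beta\diam^2}{2}$, sums, and then feeds in the already-proved bound $s(\uu_i)-s^\star\leq 2\beta\diam^2/(i+1)$ so that each weighted term $a_{i+1}[s(\uu_i)-s^\star]$ stays $\cO(\beta\diam^2)$; dividing by $\sum_{i=1}^t a_{i+1} = t(t+3)$ then delivers the $6\beta\diam^2/t$ rate. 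You need this weighted telescoping, not the uniform one you wrote down. The final complexity count and the passage from the exit test $\Delta_t\leq\eta$ to condition~\eqref{InexactProxGuarantee} are fine.
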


We note that oracle~\eqref{MainSubproblem} is called once per inner iteration and the Jacobian $\nabla \ff(\zz_k)$ is computed once per subsolver call. 
In particular, when using Algorithm~\ref{alg:InexactProx} as a subsolver, our Accelerated Method achieves the optimal number of $\bigO{\epsilon^{-1/2}}$ Jacobian computations typical of smooth and convex optimization, while maintaining a $\bigO{\epsilon^{-1}}$ complexity for the number of calls to oracle~\eqref{MainSubproblem}. The results are stated in the following corollary.
\begin{restatable}{cor-rest}{CorSubsolver}
	\label{CorSubsolver}
Consider the optimal choice of parameters
	for Algorithm~\ref{alg:fw-fully-comp}, that is $c \coloneqq 1$ and $\delta \coloneqq F(\mat{L}) \diam^2$.
	Then, solving problem~\eqref{MainProblem} with $\varepsilon$ accuracy $\varphi(\yy_k) - \varphi^{\star} \leq \varepsilon$, requires
	$ 
	\cO\bigl(  \sqrt{ \frac{F(\mat{L}) \diam^2}{ \varepsilon} } \bigr)
	$
	computations of $\nabla \ff$. In addition, the total number of calls to oracle~\eqref{MainSubproblem} is
	$
	\cO\bigl(  \frac{F(\mat{L}) \diam^2}{ \varepsilon}\bigr).
	$
\end{restatable}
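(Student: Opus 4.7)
The plan is to combine Theorem~\ref{TheoremAcceleration} (which controls the outer iteration count) with Theorem~\ref{TheoremSubsolver} (which controls the inner iteration cost per outer call), specialized to the prescribed choice $c=1$, $\delta = F(\mat{L})\diam^2$. The first step is to plug these values into the bound of Theorem~\ref{TheoremAcceleration}. Since $\max\{0,1-c\} = 0$, the second term vanishes and we obtain
$$
\varphi(\yy_k) - \varphi^{\star} \;\leq\; \frac{F(\mat{L})\diam^2 + 8F(\mat{L})\diam^2}{(k+2)(k+3)} \;=\; \frac{9\,F(\mat{L})\diam^2}{(k+2)(k+3)}.
$$
Solving $\tfrac{9F(\mat{L})\diam^2}{(k+2)(k+3)} \leq \varepsilon$ gives that $K = \cO\bigl( \sqrt{F(\mat{L})\diam^2/\varepsilon} \bigr)$ outer iterations suffice to reach $\varphi(\yy_K) - \varphi^\star \leq \varepsilon$. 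Since Algorithm~\ref{alg:fw-fully-comp} evaluates $\nabla \ff$ only once per outer iteration (at the point $\zz_{k+1}$, and the same Jacobian is reused throughout the subsolver call), this immediately yields the claimed $\cO\bigl( \sqrt{F(\mat{L})\diam^2/\varepsilon} \bigr)$ bound on the number of Jacobian computations.

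For the oracle count, I would next invoke Theorem~\ref{TheoremSubsolver} at iteration $k$ of the outer loop. With $\beta_k = F(\mat{L})\gamma_k = \tfrac{3F(\mat{L})}{k+3}$ and $\eta_k = \tfrac{F(\mat{L})\diam^2}{3(k+1)(k+2)}$, the number of inner iterations is bounded by
$$
\cO\!\left( \frac{\beta_k \diam^2}{\eta_k} \right) \;=\; \cO\!\left( \frac{F(\mat{L})\,\diam^2/(k+3)}{F(\mat{L})\diam^2/\bigl((k+1)(k+2)\bigr)} \right) \;=\; \cO(k).
$$
Each inner iteration issues exactly one call to oracle~\eqref{MainSubproblem}. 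Summing over the $K$ outer iterations gives a telescoping total of $\sum_{k=0}^{K-1} \cO(k) = \cO(K^2) = \cO\bigl( F(\mat{L})\diam^2/\varepsilon \bigr)$ oracle calls, which is exactly the second claim.

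The only slightly delicate step is verifying that the guarantee $P(\uu^+) \leq P(\uu) + \eta$ used in Theorem~\ref{TheoremAcceleration}'s analysis follows from the strictly stronger condition~\eqref{InexactProxGuarantee} enforced by the exit test of Algorithm~\ref{alg:InexactProx} (via $\Delta_t \leq \eta$); this is already established in the paragraph preceding Theorem~\ref{TheoremSubsolver}, so we may invoke Theorem~\ref{TheoremAcceleration} directly. The rest is routine arithmetic. The main (and only nontrivial) obstacle is ensuring that the per-call inner cost $\cO(\beta_k\diam^2/\eta_k)$ grows only linearly in $k$ — which works out precisely because the chosen schedules make $\beta_k = \Theta(1/k)$ decay while $1/\eta_k = \Theta(k^2)$ grows, giving the desired $\cO(k)$ and hence the optimal aggregated complexity.
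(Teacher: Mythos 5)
Your proposal is correct and follows exactly the intended route: the paper leaves Corollary 5.2 as an immediate consequence of Theorem~\ref{TheoremAcceleration} (giving $K = \cO(\sqrt{F(\mat{L})\diam^2/\varepsilon})$ outer iterations, each with one Jacobian evaluation) combined with Theorem~\ref{TheoremSubsolver} (giving $\cO(\beta_k\diam^2/\eta_k) = \cO(k)$ inner oracle calls per outer iteration, summing to $\cO(K^2)$). The arithmetic and the justification that the exit test $\Delta_t \leq \eta$ delivers condition~\eqref{InexactProxGuarantee} are both handled correctly.
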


Finally, we note that for smaller values of parameter $c \in [0, 1]$
in Algorithm~\ref{alg:fw-fully-comp}
(underestimating the Lipschitz constant),
the complexity of InexactProx procedure improves.
Thus, for $c = 0$ we have $\beta = 0$ (no regularization)
and Algorithm~\ref{alg:InexactProx} finishes after just \textit{one step}.

\section{Experiments}
\label{SectionExperiments}
\begin{figure*}[t!]
	\centering
	\begin{minipage}[t]{\textwidth}
		\setlength{\lineskip}{0pt}
		\subfigure[]{
			\centering
			\includegraphics[width=.44\linewidth]{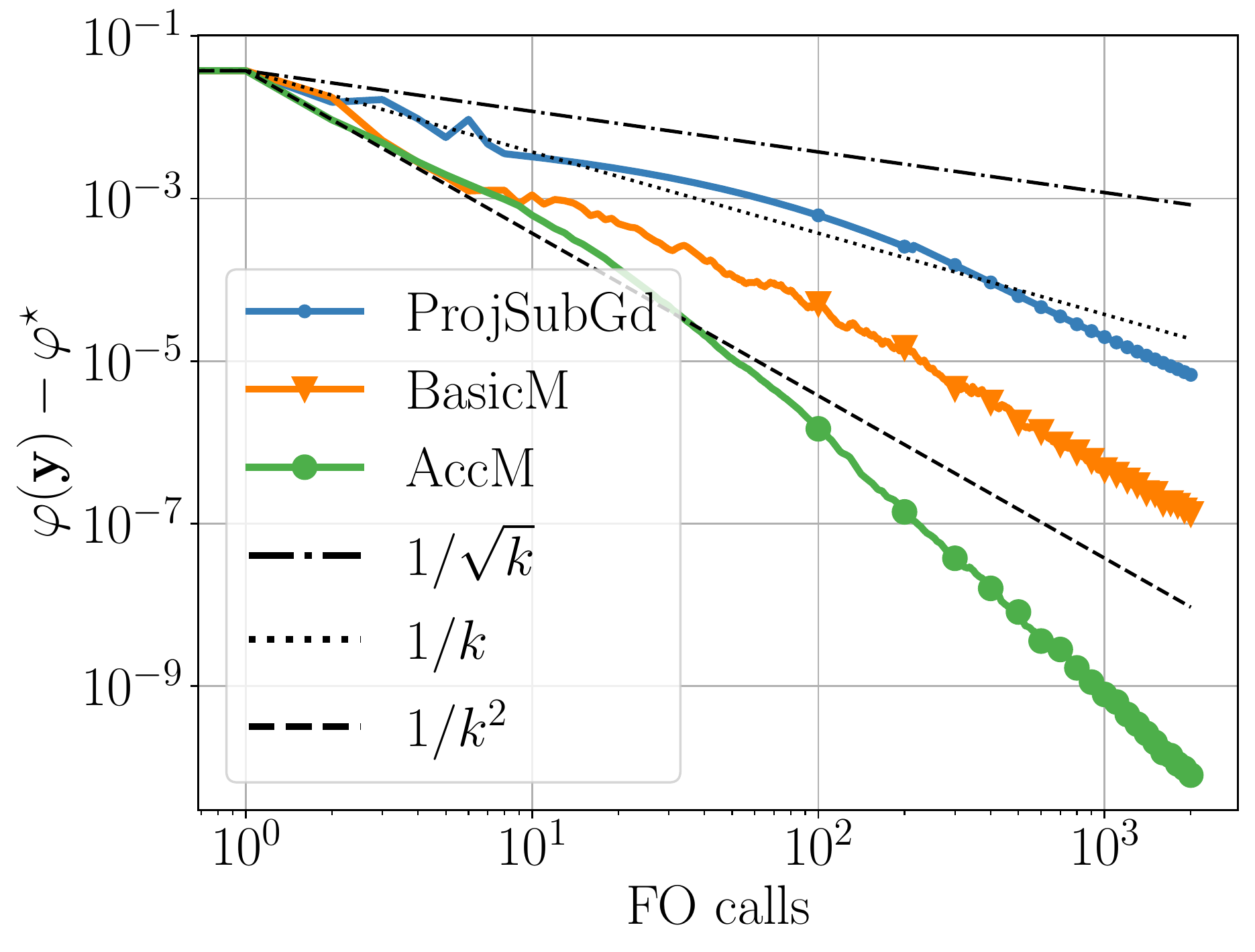}
			\label{FigSimplexFO}
		}\hspace{5mm}
		\subfigure[]{
			\centering
			\includegraphics[width=.44\linewidth]{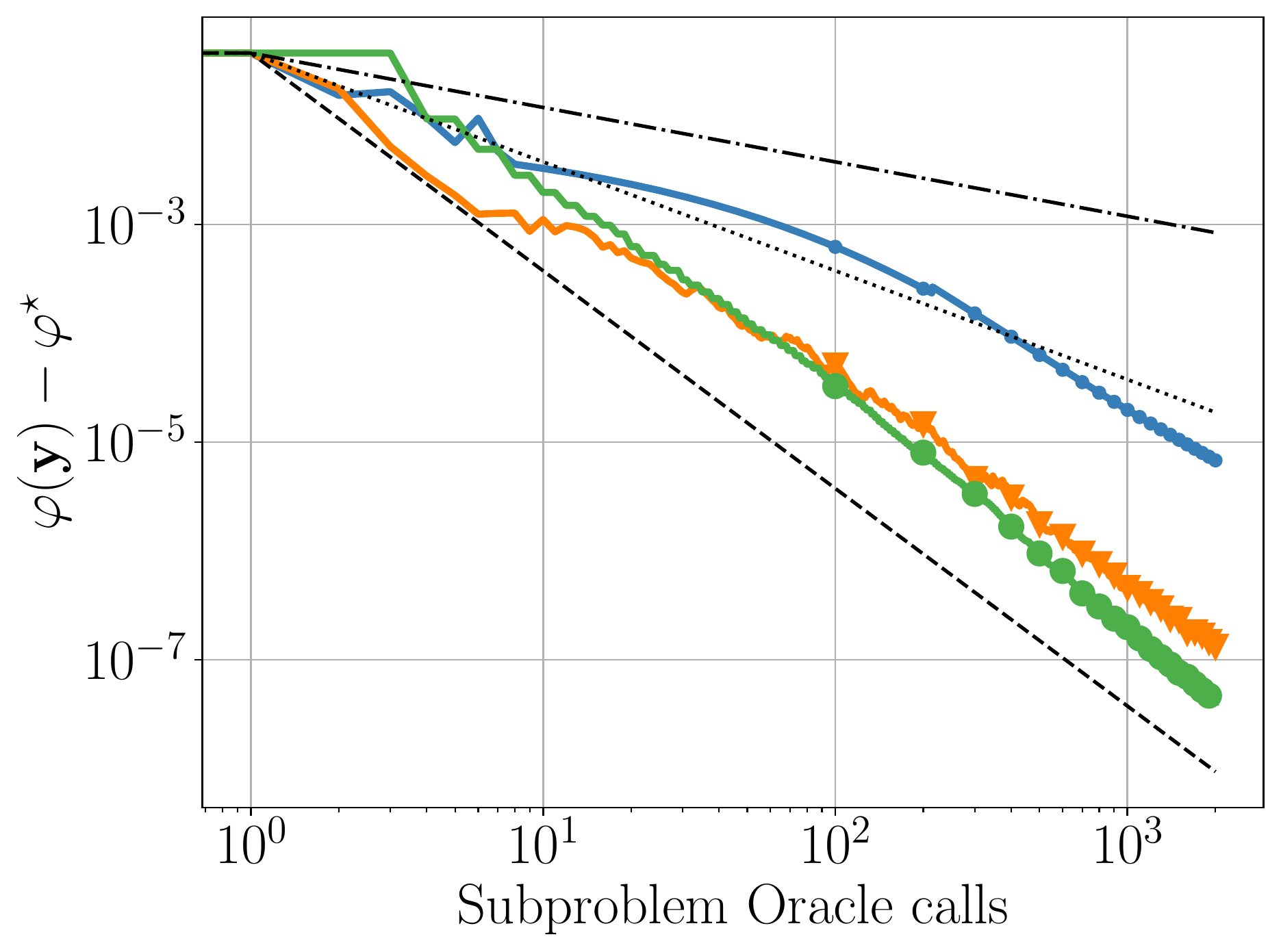}
			\label{FigSimplexLMO}
		}
		\vspace{-3mm}
		\caption{Convergence of the Basic and Accelerated methods against the Projected Subgradient baseline for problem~\eqref{exp-simplex-problem}, along with relevant theoretical rates.} 
		\vspace{-2mm}
	\end{minipage}
\end{figure*}
The experiments are implemented in \texttt{Python 3.9} and run on a MacBook Pro M1 with 16 GB RAM. For both experiments we use the Projected Subgradient Method as a baseline~\citep{shor1985minimization}, with a stepsize of $\frac{p}{\sqrt{k}}$ where $p$ is tuned for each experiment. The  \texttt{CVXPY} library~\citep{diamond2016cvxpy} is used to solve subproblems of type~\eqref{MainSubproblem}. The random seed for our experiments is always set to $\texttt{666013}$, and we set $c=1$ since we can analytically compute the Lipschitz constants or their upper bounds.

\subsection{Minimization Over the Simplex} We consider the following optimization problem
\begin{align}
	\label{exp-simplex-problem}
	\min_{\xx \in \X} \left\{ \max_{i = 1, n} \xx^\top \AA_i \xx  - \bb_i^\top\xx \right\}, \text{ for } \X = \Delta_d, \subseteq \R^d,
\end{align}
where $ \AA_i \in \R^{d \times d}$ are random PSD matrices and $\bb \in \R^d$. The problem conforms to Example~\ref{ExampleMax}, and we use $d = 500$ and $n = 10$. We generate $\AA_i = \QQ_i \DD \QQ_i^{\top}$, where $\DD$ is a diagonal matrix of eigenvalues decaying linearly from $1$ to $10^{-6}$, and $\QQ_i$ is a randomly generated orthogonal matrix using the \texttt{scipy.stats.ortho\_group} method~\citep{mezzadri2006generate}. The vectors $\bb_i$, which determine the position of the quadratics in space, are set as follows: $\bb_i = \ee_i \cdot 10$, $\bb_9 = \mathbf{0}$ (the origin), $\bb_{10} = \mathbf{1} \cdot 10$ (the all ones vector multiplied by $10$). We set $\delta = 0.2$ in the Accelerated Method (see Theorem~\ref{TheoremAcceleration}) and settle for  $p=1.42$ following tuning of the Subgradient Method. Finally, we set $\xx_0=\ee_3 \in \Delta_d$ for all methods. 

The convergence results in terms of FO oracles and oracles of type~\eqref{MainSubproblem} are shown in Figure~\ref{FigSimplexFO}~and~\ref{FigSimplexLMO}, respectively. The figures highlight the improvement in terms of the number of FO calls, while showing comparable performance in terms of subproblem oracle calls, as predicted by our theory. 

\begin{figure*}[t!]
	\centering
	\begin{minipage}[t]{\textwidth}
		\setlength{\lineskip}{0pt}
		\subfigure[]{
			\includegraphics[width=.45\linewidth]{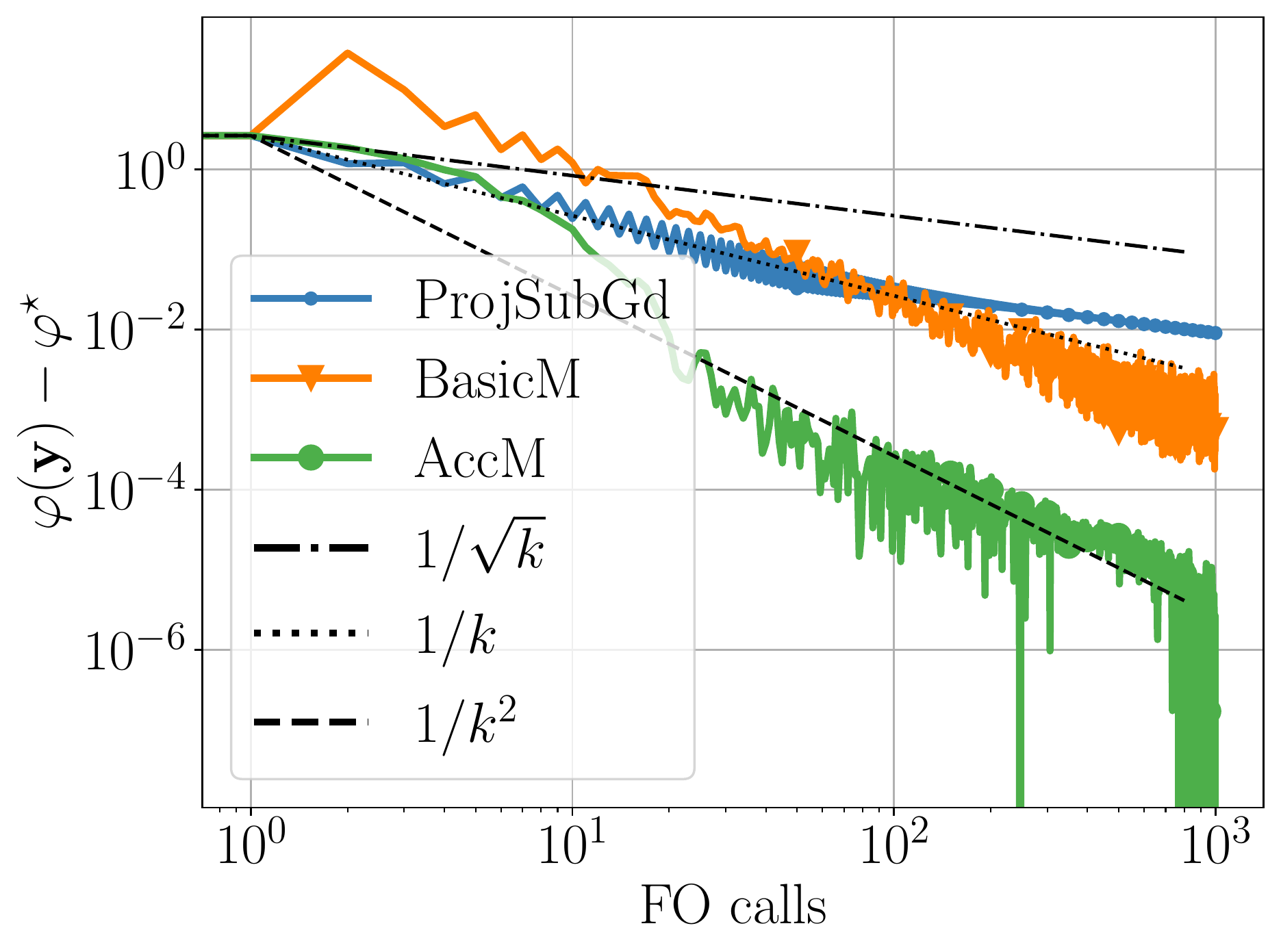}
			\label{FigNucFO}
		}
		\subfigure[]{
			\includegraphics[width=.45\linewidth]{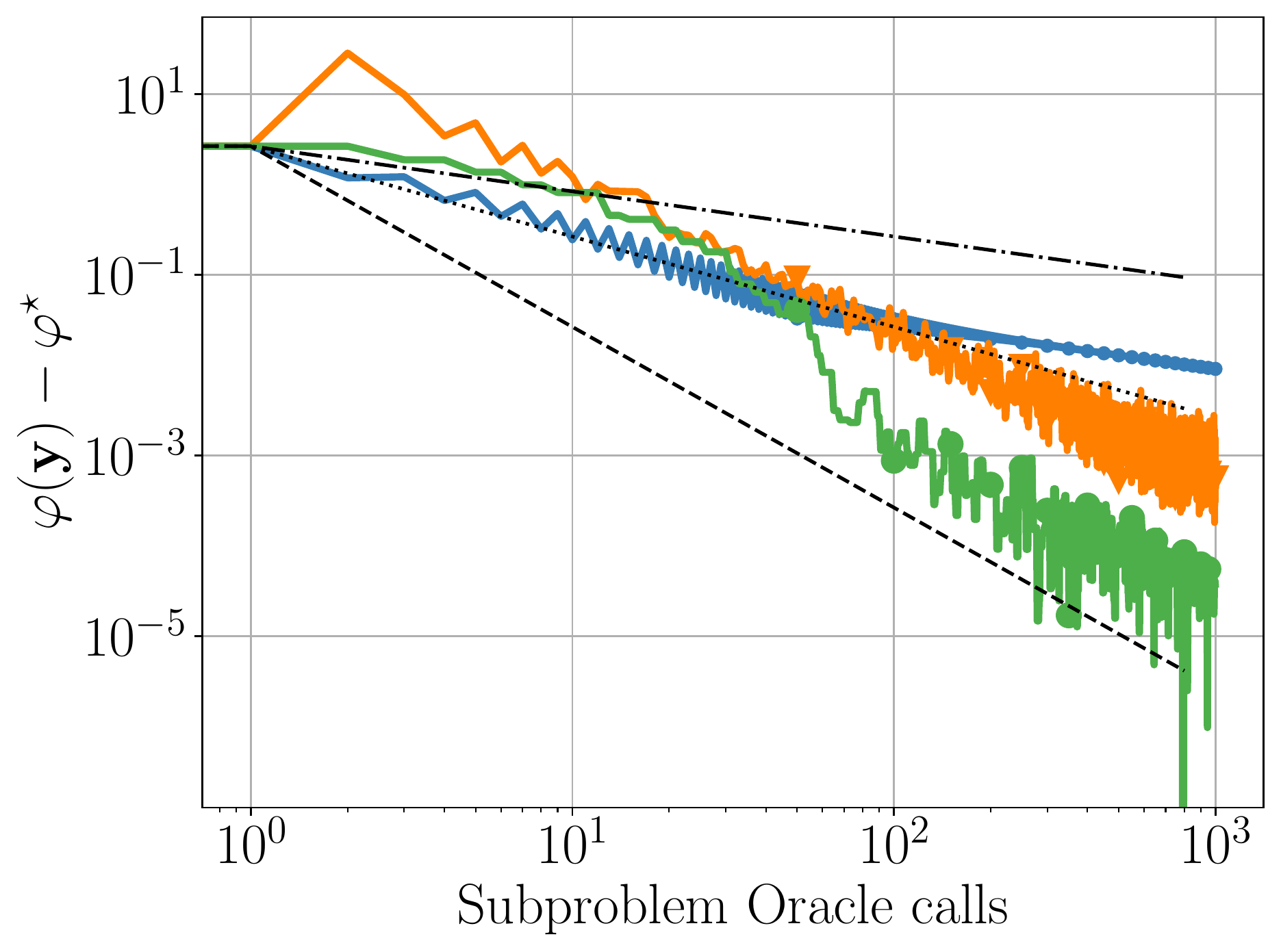}
			\label{FigNucLMO}
		}
		\caption{Convergence of the Basic and Accelerated methods against the Projected Subgradient baseline for problem~\eqref{probnucnorm}, along with relevant theoretical rates.} 
	\end{minipage}
\end{figure*}
\subsection{Minimization Over the Nuclear Norm Ball} We consider the following optimization problem
\begin{align}
	\label{probnucnorm}
	\min_{\XX \in \X}\left\{ \max_{i = 1, n} \sum_{(k, l) \in \Omega_i} \left(\XX_{k,l} - \AA^{(i)}_{k,l}\right)^2 \right\}, \text{ for } \X := \{\XX \in \R^{d \times m}, \; \norm{\XX}_* \leq r \} 
\end{align}
Formulation~\eqref{probnucnorm} models a matrix completion scenario where we wish to recover an $\XX^{\star}$ that minimizes the largest error within a given \emph{set} of matrices $\AA^{(i)}$. The matrices $\AA^{(i)}$ are only partially revealed through a set of corresponding indices $\Omega_i$. This problem conforms to Example~\ref{ExampleMax} and we use $d =30 $, $m=10$, $r=7$, where $r$ is the rank of matrices $\AA^{(i)}$. The data is generated in identical fashion as in Section~5.2~of~\citep{lan2016conditional} on Matrix Completion. We set $\delta = 100$ in the Accelerated Method (see Theorem~\ref{TheoremAcceleration}) and settle for  $p=0.2$ following tuning of the Subgradient Method. Finally, we set $\xx_0=\mathbf{0}^{d \times m} \in \X$ for all methods. 

The convergence results in terms of FO oracles and oracles of type~\eqref{MainSubproblem} are shown in Figure~\ref{FigNucFO}~and~\ref{FigNucLMO}, respectively. The figures highlight the improvement in terms of the number of FO calls, while showing comparable performance in terms of subproblem oracle calls, as predicted by our theory.

\section{Conclusion} Our work illustrates how improved convergence rates may be attained by assuming precise structure within a class of objectives (e.g., non-differentiable ones). Moreover, it shows how a simple principle such as linearizing the differentiable components of a function composition can be used to create more benign subproblems that are efficiently solved. Interesting future work may address relaxing the assumptions on $F$, extending this framework to stochastic settings, and meaningfully interpreting the quantity $\Delta_k$ in the non-convex setting.

\acks{The authors thank Aditya Varre and the anonymous reviewers for their helpful feedback on the manuscript. This work was partly supported by the Swiss State Secretariat for Education, Research and Innovation (SERI) under contract number 22.00133.}

\bibliography{references}

\newpage
\appendix
\section{Proofs}
\label{SectionProofs}
Assumption~\ref{AssumptionSmooth} implies global progress bounds on our fully composite objective with an inner linearization of $\ff$, as stated in the following Lemma~\ref{LemGlobalBounds}. This lemma provides a basis for all our convergence results. 
\begin{restatable}{lem-rest}{LemGlobalBounds}
	\label{LemGlobalBounds}
	Let $\xx, \yy \in \X$ and $\gamma \in [0, 1]$.
	Denote $\yy_{\gamma} = \xx + \gamma(\yy - \xx)$.
	Then, it holds
	\beq \label{PhiContrBound}
	\ba{rcl}
	\varphi(\yy_{\gamma}) & \leq & 
	F\bigl( \ff(x) + \nabla \ff(\xx)(\yy_{\gamma} - \xx), \, \yy_{\gamma}  \bigr)
	\; + \;
	\frac{\gamma^2}{2} \mathcal{S}.
	\ea
	\eeq
\end{restatable}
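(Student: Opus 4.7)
The plan is to decompose the inner mapping at $\yy_\gamma$ into its Jacobian-based linearization plus a curvature residual, and then pay for the residual using the bounded curvature quantity $\mathcal{S}$ via a combination of the joint convexity of $F$ (Assumption~\ref{AssumptionF}) and its subhomogeneity \eqref{FSubH} in the first slot. Concretely, I would set $\aa \Def \ff(\xx) + \nabla \ff(\xx)(\yy_\gamma - \xx)$ and $\rr \Def \ff(\yy_\gamma) - \aa$, so that $\varphi(\yy_\gamma) = F(\aa + \rr, \yy_\gamma)$ and, by the very definition of $\mathcal{S}$ in \eqref{SDef}, $F\bigl(\tfrac{2}{\gamma^2}\rr,\, \yy_\gamma\bigr) \leq \mathcal{S}$.

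The central identity I would exploit is
\[
\aa + \rr \;=\; \bigl(1 - \tfrac{\gamma^2}{2}\bigr)\aa \;+\; \tfrac{\gamma^2}{2}\bigl(\aa + \tfrac{2}{\gamma^2}\rr\bigr),
\]
which is a valid convex combination since $\gamma \in [0,1]$ gives $\tfrac{\gamma^2}{2} \in [0, \tfrac{1}{2}]$. Applying convexity of $F(\cdot,\yy_\gamma)$ then yields $F(\aa + \rr, \yy_\gamma) \leq (1-\tfrac{\gamma^2}{2})F(\aa, \yy_\gamma) + \tfrac{\gamma^2}{2}\, F(\aa + \tfrac{2}{\gamma^2}\rr, \yy_\gamma)$, reducing the problem to controlling the second $F$-value.

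For that I would establish a short auxiliary ``triangle-like'' inequality $F(\uu + \vv, \zz) \leq F(\uu, \zz) + F(\vv, \zz)$ valid for any $\uu, \vv \in \R^n$, $\zz \in \X$: writing $\uu+\vv = \tfrac{1}{2}(2\uu) + \tfrac{1}{2}(2\vv)$ and combining convexity with subhomogeneity \eqref{FSubH} at scaling factor $2 \geq 1$ on each piece delivers it in one line. Specializing to $\uu = \aa$ and $\vv = \tfrac{2}{\gamma^2}\rr$ produces $F(\aa + \tfrac{2}{\gamma^2}\rr, \yy_\gamma) \leq F(\aa, \yy_\gamma) + \mathcal{S}$. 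Substituting this back into the convex-combination bound, the coefficients in front of $F(\aa, \yy_\gamma)$ collapse to $1$ and the residual contribution is exactly $\tfrac{\gamma^2}{2}\mathcal{S}$, yielding \eqref{PhiContrBound}.

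The only delicate step is spotting the right convex-combination decomposition of $\aa + \rr$ together with the auxiliary subadditive-type bound; once these are in hand, the calculation is formulaic. The boundary case $\gamma = 0$ is immediate since then $\yy_\gamma = \xx$, $\rr = \mathbf{0}$, and both sides of \eqref{PhiContrBound} reduce to $\varphi(\xx)$.
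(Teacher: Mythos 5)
Your proof is correct and follows essentially the same route as the paper: both decompose $\ff(\yy_\gamma)$ into the linearization $\aa$ plus $\tfrac{\gamma^2}{2}\cdot\tfrac{2}{\gamma^2}\rr$ and absorb the normalized residual into $\mathcal{S}$ via the sub-additive consequence of convexity and subhomogeneity, $F(\uu+t\vv,\cdot)\leq F(\uu,\cdot)+tF(\vv,\cdot)$. The only difference is presentational: the paper cites this inequality from prior work (Theorem 7.1 of \citet{doikov2022high}), whereas you rederive the special case $t=\tfrac{\gamma^2}{2}\in[0,1]$ inline through the convex-combination splitting and the two-term subadditivity bound, and you additionally handle the boundary case $\gamma=0$ explicitly.
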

\begin{proof}
Note that the subhomogenity assumption \eqref{FSubH}
is equivalent to the following useful inequality for the outer component of the objective
see (Theorem 7.1 in \cite{doikov2022high}):
\beq \label{FSubAdd}
\ba{rcl}
F(\uu + t \vv, \xx) & \leq & F(\uu, \xx) + t F(\vv, \xx), 
\qquad
\forall \uu, \vv \in \R^n, \; \xx \in X, \; t \geq 0.
\ea
\eeq
Then, we have
$$ 
\ba{cl}
& \varphi( \yy_{\gamma} ) 
\;\; \equiv \;\;
F( \ff( \yy_{\gamma}), \, \yy_{\gamma}  )  \\[5pt]
& \; = \;
F\bigl( \ff(\xx) + \nabla \ff(\xx)(\yy_{\gamma} - \xx)
+ \frac{\gamma^2}{2} 
\cdot \frac{2}{\gamma^2}
\bigl[ \ff(\yy_{\gamma}) - \ff(\xx) - \nabla f(\xx)(\yy_{\gamma} - \xx) \bigr]
, \, \yy_{\gamma} \bigr) \\[5pt]
& \overset{\eqref{FSubAdd}}{\leq} 
F\bigl( \ff(\xx) + \nabla \ff(\xx)(\yy_{\gamma} - \xx), \, \yy_{\gamma} \bigr) 
+ \frac{\gamma^2}{2} 
F\bigl( \frac{2}{\gamma^2}
\bigl[ \ff(\yy_{\gamma}) - \ff(\xx) - \nabla f(\xx)(\yy_{\gamma} - \xx) \bigr] , \, \yy_{\gamma} \bigr)  \\[10pt]
& \; \leq \;
F\bigl( \ff(\xx) + \nabla \ff(\xx)(\yy_{\gamma} - \xx), \, \yy_{\gamma} \bigr) 
+ \frac{\gamma^2}{2} \mathcal{S},
\ea
$$
which is the desired bound.
\end{proof}

\subsection{Proof of Theorem \ref{TheoremBasicConvex}}

\ThmBasicConvex*
\begin{proof}
	Indeed, for one iteration of the method, we have
	$$
	\ba{rcl}
	\varphi(\yy_{k + 1}) & \overset{\eqref{PhiContrBound}}{\leq} &
	F\bigl( \ff(\yy_k) + \nabla \ff(\yy_k)(\yy_{k + 1} - \yy_k), \, \yy_{k + 1}  \bigr)
	+ \frac{\gamma_k^2}{2} \mathcal{S} \\
	\\
	& = &
	F\bigl( (1 - \gamma_k) \ff(\yy_k)
	 + \gamma_k( \ff(\yy_k) + \nabla \ff(\yy_k)(\xx_{k + 1} - \yy_k) ), \\
	 \\
	 & &
	 \quad\;
	 (1 - \gamma_k) \yy_k + \gamma_k \xx_{k + 1}
	 \bigr) + \frac{\gamma_k^2}{2} \mathcal{S} \\
	 \\
	 & \overset{(*)}{\leq} &
	 \varphi(\yy_k) + \gamma_k\Bigl[ 
	 F\bigl(  \ff(\yy_k) + \nabla \ff(\yy_k)(\xx_{k + 1} - \yy_k), \xx_{k + 1} \bigr)
	 - \varphi(\yy_k)
	 \Bigr]
	+ \frac{\gamma_k^2}{2} \mathcal{S}  \\
	\\
	& \equiv &
	\varphi(\yy_k) - \gamma_k \Delta_k + \frac{\gamma_k^2}{2} \mathcal{S},
	\ea
	$$
	where we used in $(*)$ that $F(\cdot, \cdot)$ is jointly convex.
	Hence, we obtain the following inequality for the progress of one step, for $k \geq 0$:
	\beq \label{GapKey}
	\ba{rcl}
	\varphi(\yy_k) - \varphi(\yy_{k + 1})
	& \geq & 
	\gamma_k \Delta_k
	- \frac{\gamma_k^2}{2} \mathcal{S}.
	\ea
	\eeq	
	
	Now, let us choose use an auxiliary sequence $A_k := k \cdot (k + 1)$ and
	$a_{k + 1} := A_{k + 1} - A_k = 2(k + 1)$.
	Then,
	$$
	\ba{rcl}
	\frac{a_{k + 1}}{A_{k + 1}} & = & \frac{2}{2 + k},
	\ea
	$$
	which is one of the possible choices for $\gamma_k$.
	Note that for the other choice, we set $\gamma_k = \min\bigl\{1, \frac{\Delta_k}{\mathcal{S}} \bigr\}$,
	which maximizes the right hand side of \eqref{GapKey}
	with respect to $\gamma_k \in [0, 1]$.
	Hence, in both cases we have that
	\beq \label{Th1GapBound}
	\ba{rcl}
	\varphi(\yy_k) - \varphi(\yy_{k + 1})
	& \geq & 
	\frac{a_{k + 1}}{A_{k + 1}} \Delta_k
	- \frac{a_{k + 1}^2}{2 A_{k + 1}^2} \mathcal{S},
	\ea
	\eeq
	or, rearranging the terms,
	$$
	\ba{rcl}
	A_{k + 1} \bigl[ \varphi(\yy_{k + 1}) - \varphi^{\star} \bigr]
	& \overset{\eqref{Th1GapBound}}{\leq} &
	A_{k + 1} \bigl[ \varphi(\yy_k) - \varphi^{\star}]
	- a_{k + 1} \Delta_k
	+ \frac{a_{k + 1}^2}{2A_{k + 1}} \mathcal{S} \\
	\\
	& \overset{\eqref{DeltaKLower}}{\leq} &
	A_{k} \bigl[ \varphi(\yy_k) - \varphi^{\star}]
	+ \frac{a_{k + 1}^2}{2A_{k + 1}} \mathcal{S}.
	\ea
	$$
	Telescoping this bound for the first $k \geq 1$ iterations, we get
	\beq \label{Th1FuncRes}
	\ba{rcl}
	\varphi(\yy_k) - \varphi^{\star} & \leq &
	\frac{\mathcal{S}}{2 A_k} \cdot \sum\limits_{i = 1}^k \frac{a_i^2}{A_i}
	\;\; = \;\;
	\frac{2 \mathcal{S}}{k (k + 1)} \cdot \sum\limits_{i = 1}^k \frac{i}{i + 1}
	\;\;
	\leq \;\;
	\frac{2 \mathcal{S}}{k + 1}.
	\ea
	\eeq
	
	It remains to prove the convergence in terms of the accuracy measure $\Delta_k$.
	For that, we telescope the bound \eqref{Th1GapBound}, which is
	\beq \label{Th1GapB1}
	\ba{rcl}
	a_{k + 1} \Delta_k & \leq & a_{k + 1} \varphi(\yy_k)
	+ A_k \varphi(\yy_k) - A_{k + 1} \varphi(\yy_{k + 1})
	+ \frac{a_{k + 1}^2}{A_{k + 1}} \frac{\mathcal{S}}{2},
	\ea
	\eeq
	for the $k \geq 1$ iterations, and use the convergence for the functional residual \eqref{Th1FuncRes}:
	$$
	\ba{rcl}
	\sum\limits_{i = 1}^k a_{i + 1}
	\cdot \min\limits_{1 \leq i \leq k} \Delta_i
	& \leq & 
	\sum\limits_{i = 1}^k a_{i + 1} \Delta_i \\
	\\
	& \overset{\eqref{Th1GapB1}}{\leq} &
	a_1 \bigl[ \varphi(\yy_1) - \varphi^{\star}  \bigr]
	+ \sum\limits_{i = 1}^k
	a_{i + 1} \bigl[  \varphi(\yy_i) - \varphi^{\star} \bigr]
	+ \frac{\mathcal{S}}{2} \sum\limits_{i = 1}^k \frac{a_{i + 1}^2}{A_{i + 1}} \\
	\\
	& \overset{\eqref{Th1FuncRes}}{\leq} &
	2 \mathcal{S} \cdot 
	\Bigl( 
	1 + \sum\limits_{i = 1}^k \frac{a_{i + 1}}{i + 1}
	+ \sum\limits_{i = 1}^k \frac{i}{i + 1} 
	\Bigr)
	\;\; \leq \;\;
	2\mathcal{S} \cdot (1 + 3k).
  	\ea
	$$ 
	To finish the proof, we need to divide both sides by
	$\sum\limits_{i = 1}^k a_{i + 1} = A_{k + 1} - a_1 = k (3 + k)$.
\end{proof}

\subsection{Proof of Theorem \ref{TheoremGap}}

\ThmGap*
\begin{proof}
	As in the proof of the previous theorem, our main 
	inequality \eqref{GapKey} on the progress of one step is:
	$$
	\ba{rcl}	
	\varphi(\yy_k) - \varphi(\yy_{k + 1})
	& \geq & 
	\gamma_k \Delta_k
	- \frac{\gamma_k^2}{2} \mathcal{S},
	\ea
	$$
	where we can substitute $\gamma_k = \frac{1}{\sqrt{k + 1}}$
	for the both strategies of choosing this parameter.
	
	Summing up this bound for the first $k + 1$ iterations, we obtain
	\beq \label{DeltaBound}
	\ba{rcl}
	\sum\limits_{i = 0}^k \gamma_i \Delta_i
	& \leq& 
	\varphi(\yy_0) - \varphi(\yy_{k + 1}) 
	+ \frac{\mathcal{S}}{2} \sum\limits_{i = 0}^k \gamma_i^2.
	\ea
	\eeq
	Using the bound $\varphi(\yy_{k + 1}) \geq \varphi^{\star}$ and our value of $\gamma_i$, we get
	$$
	\ba{rcl}
 \min\limits_{0 \leq i \leq k} \Delta_i \cdot \sqrt{k + 1}
 & \leq &
 	\sum\limits_{i = 0}^k \frac{\Delta_i}{\sqrt{1 + i}}
 	\;\; \overset{\eqref{DeltaBound}}{\leq} \;\;
 	\varphi(\yy_0) - \varphi^{\star} + \frac{\mathcal{S}}{2} \sum\limits_{i = 0}^k \frac{1}{1 + i} \\
 	\\
 	& \leq & 
 	\varphi(\yy_0) - \varphi^{\star}  + \frac{\mathcal{S}}{2} (1 + \ln(k + 1)),
	\ea
	$$
	which is \eqref{ConvergenceGapGeneral}.
\end{proof}

\subsection{Proof of Theorem \ref{TheoremAcceleration}}

\ThmAcceleration*
\begin{proof}
	Let us consider one iteration of the method, for some $k \geq 0$.
	
	Since all the components of $\ff$ have the Lipschitz continuous gradients,
	it hold that
	$$
	\ba{rcl}
	\ff(\yy_{k + 1}) & \leq &
	\ff(\zz_{k + 1}) + \nabla \ff(\zz_{k + 1})(\yy_{k + 1} - \zz_{k + 1})
	+ \frac{\mat{L}}{2} \|\yy_{k + 1} - \zz_{k + 1}\|^2,
	\ea
	$$
	where the vector inequality is component-wise.
	Then, using the properties of $F$, we have
	$$
	\ba{rcl}
	\varphi(\yy_{k + 1}) & = & F(\ff(\yy_{k + 1}), \yy_{k + 1}) \\
	\\
	& \overset{\eqref{FMonotone}, \eqref{FSubAdd}}{\leq} &
	F( \ff(\zz_{k + 1}) + \nabla \ff(\zz_{k + 1}) (\yy_{k + 1} - \zz_{k + 1}), \yy_{k + 1}   )
	+ \frac{F(\mat{L})}{2}\| \yy_{k + 1} - \zz_{k + 1}\|^2 \\
	\\
	& = & 
	F\bigl( (1 - \gamma_k) \bigl[ \ff(\zz_{k + 1}) +   \nabla \ff(\zz_{k + 1})(\yy_k - \zz_{k + 1}) \bigr] \\
	\\
	& & \,\; \qquad
	\; + \; \gamma_k \bigl[ \ff(\zz_{k + 1}) + \nabla \ff(\zz_{k + 1}) (\xx_{k + 1} - \zz_{k + 1}) \bigr],  \\
	\\
	& & \qquad \qquad
	(1 - \gamma_k) \yy_k + \gamma_k \xx_{k + 1} \bigr)
	\; + \; \frac{\gamma_k^2 F(\mat{L})}{2}\| \xx_{k + 1} - \xx_k\|^2 \\
	\\
	& \leq & 
	(1 - \gamma_k) F( \ff(\zz_{k + 1}) + \nabla \ff( \zz_{k + 1})( \yy_k - \zz_{k + 1}), \, \yy_k  ) \\
	\\
	& & + \;
	\gamma_k F( \ff(\zz_{k + 1}) +  \nabla \ff(\zz_{k + 1})( \xx_{k + 1} - \zz_{k + 1} ), \, \xx_{k + 1}   )
	\; + \; \frac{\gamma_k^2 F(\mat{L})}{2}\| \xx_{k + 1} - \xx_k\|^2 \\
	\\
	& \leq &
	(1 - \gamma_k) \varphi(\yy_{k})
	\; + \; \gamma_k F( \ff(\zz_{k + 1}) +  \nabla \ff(\zz_{k + 1})( \xx_{k + 1} - \zz_{k + 1} ), \, \xx_{k + 1}   )\\
	\\
	& & \, \; \qquad
	\; + \; \frac{\gamma_k^2 F(\mat{L})}{2}\| \xx_{k + 1} - \xx_k\|^2,
	\ea
	$$
	where the second equality comes from the update rule of $\yy_{k+1}$, the second inequality comes from the joint convexity in Assumption~\ref{AssumptionF}, the third inequality comes from convexity of the components of $\ff$ and monotonicity of $F$.
	
	Since we are introducing a norm-regularized minimization subproblem for the purpose of acceleration, the term $\frac{\gamma_k^2 F(\mat{L})}{2}\| \xx_{k + 1} - \xx_k\|^2$ can be further upper bounded using $\eta_k$-approximate guarantee \eqref{InexactProxGuarantee},
	as follows, for any $\xx \in \X$:
	$$
	\ba{rcl}
	\frac{\gamma_k^2 F(\mat{L})}{2}\| \xx_{k + 1} - \xx_k\|^2
	& = &
	\left(\frac{\gamma_k^2F(\mat{L})}{2} - \frac{\beta_k\gamma_k}{2} \right)  
	\| \xx_{k+1} - \xx_k  \|^2  +   \frac{\beta_k\gamma_k}{2} \| \xx_{k+1} - \xx_k \|^2 \\
	\\
	& \leq &
	\frac{\beta_k\gamma_k}{2} \| \xx_{k+1} - \xx_k \|_2^2  + \frac{\gamma_k^2F(\mat{L})(1 - c)}{2}\| \xx_{k+1} - \xx_k  \|^2 \\
	\\
	& = &
	\frac{\beta_k\gamma_k}{2} \left( \|{\xx - 
		\xx_{k}}\|_2^2 - \|{\xx - \xx_{k+1} }\|^2 - 2\langle \xx_{k} - \xx_{k+1} , \xx_{k+1} - \xx \rangle\right) \\[2mm]
		& &+ \frac{\gamma_k^2F(\mat{L})(1 - c)}{2}\| \xx_{k+1} - \xx_k  \|^2 \nn \\
	\\
	& \overset{\eqref{InexactProxGuarantee}}{\leq} &
	\frac{\beta_k\gamma_k}{2} \left( \|{\xx - 
		\xx_{k}}\|_2^2 - \|{\xx - \xx_{k+1} }\|^2 \right) + \frac{\gamma_k^2F(\mat{L})(1 - c)}{2}\| \xx_{k+1} - \xx_k  \|^2\\
	\\
	& &	
	+ \; 
	\gamma_k F( \ff(\zz_{k + 1}) + \nabla \ff(\zz_{k + 1})( \xx - \zz_{k + 1} ), \xx ) \\
	\\
	& &
	- \;
	\gamma_k F( \ff(\zz_{k + 1}) + \nabla \ff(\zz_{k + 1})( \xx_{k + 1} - \zz_{k + 1} ), \xx_{k + 1} ) + 
	\gamma_k \eta_k,
	\ea
	$$
	where we used our choice $\beta_k = c F(\mat{L})\gamma_k$.

	Therefore, combining these two bounds together, we obtain
	$$
	\ba{rcl}
	\varphi(\yy_{k + 1}) & \leq &
	(1 - \gamma_k) \varphi(\yy_k)
	+ \gamma_k F( \ff(\zz_{k + 1}) + \nabla \ff(\zz_{k + 1})( \xx - \zz_{k + 1} ), \xx ) \\
	\\
	& & \; + \;
	\frac{\beta_k\gamma_k}{2} \left( \|{\xx - 
		\xx_{k}}\|^2 - \|{\xx - \xx_{k+1} }\|^2 \right) + \frac{\gamma_k^2F(\mat{L})(1 - c)}{2}\| \xx_{k+1} - \xx_k  \|^2 + \gamma_k \eta_k \\
	\\
	& \leq &
	(1 - \gamma_k) \varphi(\yy_k) + \gamma_k \varphi(\xx)
	+ \frac{\beta_k\gamma_k}{2} \left( \|{\xx - 
		\xx_{k}}\|^2 - \|{\xx - \xx_{k+1} }\|^2 \right) \\[3mm]
		&& + \frac{\gamma_k^2F(\mat{L})(1 - c)}{2}\| \xx_{k+1} - \xx_k  \|^2+ \gamma_k \eta_k,
	\ea
	$$
	for all $\xx \in \X$, where we used convexity of $\ff$ and monotonicity of $F$.
	
	We now subtract $\varphi(\xx)$ from both sides, let $\xx = \xx^{\star}$ and denote $\varepsilon_k := \varphi(\yy_k) - \varphi^{\star}$,
	which gives
	\beq \label{Proofeq:recurrence-epsilon}
	\ba{rcl}
	\varepsilon_{k+1} &\leq& 
	(1 - \gamma_k) \varepsilon_{k} + \frac{\gamma_k \beta_k}{2} \left( \|{\xx_{k} - \xx^{\star} }\|^2 -  
	\|{\xx_{k+1} - \xx^{\star}}\|^2 \right) \\[3mm]
	&&+ \frac{\gamma_k^2F(\mat{L})(1 - c)}{2}\| \xx_{k+1} - \xx_k  \|^2 + \gamma_k\eta_k. 
	\ea
	\eeq
	We now move on to choosing the parameters $\gamma_k$, $\eta_k$ and $\beta_k$ in a way that allows us to accelerate. For more flexibility, we let $\gamma_k := \frac{a_{k+1}}{A_{k+1}}$, for some sequences $\{a_k\}_{k\geq 0}$ and $\{A_{k}\}_{k\geq 0}$ that will be defined later.
Then~\eqref{Proofeq:recurrence-epsilon} becomes:
	$$
	\ba{rcl}
	A_{k+1}\varepsilon_{k+1} &\leq&  
	A_0 \epsilon_0 + \sum\limits_{i = 0}^k a_{i+1}\eta_i + \frac{1}{2}\sum\limits_{i = 0}^k a_{i+1} \beta_i \left( \normsqr{\xx_{i} -\xx^{\star} } 
	-  \normsqr{\xx_{i+1} - \xx^{\star}} \right) \\[4mm]
	&&+  \frac{F(\mat{L})(1 - c)}{2} \sum\limits_{i = 0}^k \frac{a_{i+1}^2}{A_{i+1}}\| \xx_{i+1} - \xx_i  \|^2\nn \\
	\\
	& \leq & 	A_0 \epsilon_0 +
	\sum\limits_{i = 0}^k a_{i+1}\eta_i + \frac{a_1 \beta_0}{2} \normsqr{\xx_{0} -\xx^{\star} } + \frac{1}{2}\sum_{i = 1}^{k}  \left( a_{i+1} \beta_{i} - a_i \beta_{i-1} \right) \normsqr{\xx_i - \xx^{\star}}\\[5mm]
	&&+  \frac{F(\mat{L})(1 - c)}{2} \sum\limits_{i = 0}^k \frac{a_{i+1}^2}{A_{i+1}}\| \xx_{i+1} - \xx_i  \|^2\nn 
	\ea
	$$
	and therefore we have

	\beq \label{Proofeq:new-recursion}
	\ba{rcl}
	\varepsilon_{k+1} &\leq& 	 \frac{A_0 \epsilon_0}{A_{k+1}}+
	\frac{1}{A_{k+1}}\sum\limits_{i = 0}^k a_{i+1}\eta_i + \frac{a_1 \beta_0}{2A_{k+1}} \normsqr{\xx_{0} -\xx^{\star} } \\
	\\
	& &  \; + \; 
	\frac{1}{2A_{k+1}}\sum\limits_{i = 1}^{k}  \left( a_{i+1} \beta_{i} - a_i \beta_{i-1} \right) \normsqr{\xx_i - \xx^{\star} } +  \frac{F(\mat{L})(1 - c)}{2A_{k+1}} \sum\limits_{i = 0}^k \frac{a_{i+1}^2}{A_{i+1}}\| \xx_{i+1} - \xx_i  \|^2\nn.
	\ea
	\eeq
	
	We wish to choose sequences $A_k$, $a_k$, $\beta_k$ and $\eta_k$ such that we obtain a $\bigO{1/k^2}$ rate of convergence on the functional residual of $\varphi(\cdot)$. The constraint we require on the sequences is $\gamma_k F(\mat{L}) \leq \beta_k$. The following choices 
	$$
	\ba{rcl}
	\eta_k  & = & \frac{\delta}{a_{k+1}}, \; \text{ for some constant $\delta > 0$}, \\
	\\
	\beta_k & =&  c F(\mat{L}) \gamma_k,  \; \text{ for some constant $c > 0$}\\
	\\
    a_{k+1} &=& A_{k+1} - A_k  \;\;= \;\; \frac{3A_{k+1}}{k+3}, \quad A_{k+1} \;\;=\;\; (k+1)(k+2)(k+3),
	\ea
	$$
	give us the desired outcome, since equation~\eqref{Proofeq:new-recursion} becomes: 
\begin{align*}
	\varepsilon_{k+1} &\leq 
	\frac{\delta}{(k+2)(k+3)} + \frac{3cF(\mat{L})\norm{\xx_0 - \xopt}^2}{(k+1)(k+2)(k+3)}  + \frac{5ckF(\mat{L})\diam^2}{(k+1)(k+2)(k+3)}  \\[4mm]
	&\hspace{5mm} +  \frac{9F(\mat{L})(1 - c)}{2(k+1)(k+2)(k+3)} \sum\limits_{i = 0}^k (i+1)\| \xx_{i+1} - \xx_i \|^2\nn \\[4mm]
	&\leq 
	\frac{\delta}{(k+2)(k+3)} + \frac{8cF(\mat{L})\diam^2}{(k+2)(k+3)} +  \frac{2\max\{0, 1-c\}F(\mat{L})\diam^2 }{k+3}
\end{align*}
	since $a_{i+1} \beta_{i} - a_i \beta_{i-1} = \frac{9c F(\mat{L}) \left( i^2 + 5i + 4\right)}{i^2 + 5i + 6} < 9cF(\mat{L})$ and $\normsqr{\xx_i - \xx^{\star}} \leq \diam^2$.
\end{proof}

\subsection{Proof of Theorem \ref{TheoremSubsolver}}
\ThmSubsolver*
\begin{proof}
	Let us introduce our subproblem, in a general form, that is 
	\beq \label{ProofThSubMain}
	\ba{rcl}
	s^{\star} & = & 
	\min\limits_{\uu \in \X}
	\Bigl\{
	s(\uu) \; \Def \; r(\uu) + h(\uu),
	\Bigr\}
	\ea
	\eeq
	where $r(\cdot)$ is a differentiable convex function, whose
	gradient is Lipschitz continuous with constant $\beta > 0$,
	and $h(\uu)$ is a general proper closed convex function, not necessarily differentiable.
	
	In our case, for computing the inexact proximal step~\eqref{ProxPoint}, we set
	$$
	\ba{rcl}
	r(\uu) & := & \frac{\beta}{2}\|\uu - \xx\|^2, \\
	\\
	h(\uu) & := & F( \ff(\zz) + \nabla f(\zz) (\uu - \zz), \uu ),
	\ea
	$$
	for a fixed $\xx$ and $\zz$.
	
	Then, in each iteration of Algorithm~\ref{alg:InexactProx}, we compute, for $t \geq 0$:
	\beq \label{ProofThSubVt}
	\ba{rcl}
	\vv_{t + 1} & \in & \Argmin\limits_{\uu \in \X}
	\Bigl\{ 
	\;
	\la \nabla r(\uu_t), \uu \ra + h(\uu)
	\;
	\Bigr\}.
	\ea
	\eeq
	The optimality condition for this operation is (see, e.g. Theorem 3.1.23 in \cite{nesterov2018lectures})
	\beq \label{ProofThSubStat}
	\ba{rcl} 
	\la \nabla r(\uu_t), \uu - \vv_{t + 1} \ra + h(\uu) & \geq & h(\vv_{t + 1}),
	\qquad \forall \uu \in \X.
	\ea
	\eeq
	Therefore, employing the Lipschitz continuity of the gradient of $r(\cdot)$, we have
	\beq \label{ProofThSubStep1}
	\ba{rcl}
	s(\uu_{t + 1}) & \leq & 
	r(\uu_t) + \la \nabla r(\uu_t), \uu_{t + 1} - \uu_t \ra
	+ \frac{\beta}{2} \| \uu_{t + 1} - \uu_t \|^2 + h(\uu_{t + 1}) \\
	\\
	& = & 
	r(\uu_t) + \alpha_t \la \nabla r(\uu_t), \vv_{t + 1} - \uu_t \ra
	+ \frac{\beta \alpha_t^2}{2}\|\vv_{t + 1} - \uu_t\|^2 \\
	\\
	& & \qquad \; + \; h(\alpha_t \vv_{t + 1} + (1 - \alpha_t) \uu_t) \\
	\\
	& \leq & 
	s(\uu_t) + \alpha_t \bigl( 
	\la \nabla r(\uu_t), \vv_{t + 1} - \uu_t \ra
	+ h(\vv_{t + 1}) - h(\uu_t)
	\bigr) + \; \frac{\beta \alpha^2\diam^2}{2} \\
	\\
	& \equiv &
	s(\uu_t) - \alpha_t \Delta_t + \frac{\beta \alpha_t^2\diam^2}{2},
	\ea
	\eeq
	where the last equality comes from the definition of $\Delta_t$ in Algorithm~\ref{alg:InexactProx}.
	
	Note that $\alpha_t$ is defined as the minimizer of $\frac{\beta \alpha_t^2}{2} \|\vv_{t + 1} - \uu_t \|^2 - \alpha_t \Delta_t $ and hence, for any other $\rho_t \in [0, 1]$ it will hold that: 
	
	\begin{equation}
	\label{ProofThSubStep2}
	        	s(\uu_{t + 1})  \;\; \leq \;\;s(\uu_t) - \rho_t \Delta_t + \frac{\beta \rho_t^2\diam^2}{2} .
	\end{equation}

	At the same time,
	\beq \label{ProofThSubLB}
	\ba{rcl}
	\Delta_t & \Def & h(\uu_t) - h(\vv_{t+1}) -  \la \nabla r(\uu_t), \vv_{t+1} - \uu_t \ra \\
	\\
	& \overset{\eqref{ProofThSubVt}}{\geq} & 
	h(\uu_t) - h(\uu) - \la \nabla r(\uu_t), \uu - \uu_t \ra \\
	\\
	& \geq &
	s(\uu_t) - s(\uu), \qquad \forall \uu \in \X
	\ea
	\eeq
	where the last line follows from the convexity of $r(\uu)$.
	Letting $\uu := \uu^{\star}$ (solution to \eqref{ProofThSubMain}) in \eqref{ProofThSubLB} and further substituting it into \eqref{ProofThSubStep2} and subtracting $\ss^{\star}$ from both sides, we obtain
	\beq \label{ProofThSubStep3}
	\ba{rcl}
	\left[ s(\uu_{t + 1}) - s^{\star} \right] & \leq & (1 - \rho_t) \left[ s(\uu_t) - \ss^{\star} \right] + \frac{\beta\rho_t^2 \diam^2}{2}.
	\ea
	\eeq
	
	Now, let us choose $\rho_t := \frac{a_{t+1}}{A_{t+1}}$ for sequences $A_t := t \cdot (t + 1)$, and $a_{t + 1} := A_{t + 1} - A_t = 2(t + 1)$. Then,
	$
	\ba{rcl}
	\rho_t & := &  \frac{2}{2 + t}, \;\; t \geq 0.
	\ea
	$
	Using this choice, inequality \eqref{ProofThSubStep3} can be rewritten as
	$$
	\ba{rcl}
	A_{t + 1} \bigl[ s(\uu_{t + 1}) - s^{\star} \bigr] 
	& \leq & 
	A_t \bigl[ s(\uu_t)  - s^{\star} \bigr]
	+ \frac{a_{t + 1}^2\beta\diam^2}{2A_{t+1}}
	\ea
	$$
	Telescoping this inequality for the first iterations, we obtain, for $t \geq 1$:
	\beq \label{ProofThSubResConv}
	\ba{rcl}
	s(\uu_t) - s^{\star} & \leq &  \frac{\beta \diam^2}{2 A_t} \cdot \sum\limits_{i = 1}^t \frac{a_i^2}{A_i}
	\;\; = \;\;
	\frac{\beta \diam^2}{2t(t + 1)} \cdot \sum\limits_{i = 1}^t \frac{4i}{i+1}
	\;\; \leq \;\;
	\frac{2 \beta \diam^2}{t + 1}.
	\ea
	\eeq
	This is the global convergence in terms of the functional residual.
	It remains to justify the convergence for the accuracy certificates $\Delta_t$.
	Multiplying \eqref{ProofThSubStep2} by $A_{t + 1}$, we obtain
	\beq \label{ProofThSubStep4}
	\ba{rcl}
	a_{t + 1} \Delta_t & \leq & a_{t + 1} s(\uu_t) + A_t s(\uu_t) - A_{t + 1} s(\uu_{t + 1})
	+ \frac{a_{t + 1}^2}{A_{t + 1}} \frac{\beta \diam^2}{2}.
	\ea
	\eeq
	Telescoping this bound, we get,
	for $t \geq 1$:
	$$
	\ba{rcl}
	\sum\limits_{i = 1}^t a_{i + 1}
	\cdot \min\limits_{1 \leq i \leq t} \Delta_i
	& \leq & 
	\sum\limits_{i = 1}^t a_{i + 1} \Delta_i \\
	\\
	& \overset{\eqref{ProofThSubStep4}}{\leq} &
	a_1 \bigl[ s(\uu_1) - s^{\star} \bigr]
	+ \sum\limits_{i = 1}^t a_{i + 1} \bigl[ s(\uu_i) - s^{\star}  \bigr]
	+ \frac{\beta \diam^2}{2} \sum\limits_{i = 1}^t \frac{a_{i + 1}^2}{A_{i + 1}} \\
	\\
	& \overset{\eqref{ProofThSubResConv}}{\leq} &
	2 \beta \diam^2
	\cdot \Bigl( 
	1 + \sum\limits_{i = 1}^t \frac{a_{i + 1}}{i + 1}
	+ \frac{1}{4} \sum\limits_{i = 1}^t \frac{a_{i + 1}^2}{A_{i + 1}} 
	\Bigr)  \\
	\\
	& \leq & 
	2 \beta \diam^2 \cdot (1 + 3t).
	\ea
	$$
	Dividing both sides by $\sum_{i = 1}^t a_{i + 1} = A_{t + 1} - A_1 = t(3 + t)$ completes the proof we finally get:
	\begin{align*}
	    \min\limits_{1 \leq i \leq t} \Delta_i \;\; \leq \;\; \frac{6 \beta \diam^2 }{t}.
	\end{align*}
\end{proof}

\subsection{Proof of Proposition \ref{PropNorm}}
\label{app:proof-prop-3-1}
\PropNorm*
\begin{proof}
	In our case, we have $\varphi(\xx) \equiv \| \ff(\xx) \|_2$.
	Using Lemma~\ref{LemGlobalBounds}, we obtain
	\beq \label{GNOneStep}
	\ba{rcl}
	\varphi(\yy_{k + 1}) & 
	\leq &
	\| \ff(\yy_k) + \nabla f(\yy_k) (\yy_{k + 1} - \yy_k) \|_2
	+ \frac{\gamma_k^2}{2} \mathcal{S} \\
	\\
	& = &
	\| \ff(\yy_k) + \gamma_k \nabla f(\yy_k) (\xx_{k + 1} - \yy_k) \|_2
	+ \frac{\gamma_k^2}{2} \mathcal{S},
	\ea
	\eeq
	where $\xx_{k+1} \in \X$ is the point such that $\yy_{k+1} = \yy_k + \gamma_k(\xx_{k+1} - \yy_k)$.
	Using convexity of the function
	$g(\xx) \Def \| \ff(\yy_k) + \gamma_k \nabla \ff(\yy_k)(\xx - \yy_k)\|_2$,
	we get that
	$$
	\ba{rcl}
	\varphi(\yy_k) & = & g(\yy_k)
	\;\; \geq \;\;
	g(\xx_{k + 1}) + \la g'(\xx_{k + 1}), \yy_k - \xx_{k + 1} \ra \\
	\\
	& = &
	\| \ff(\yy_k) + \gamma_k \nabla f(\yy_k) (\xx_{k + 1} - \yy_k) \|_2
	+ \la g'(\xx_{k + 1}), \yy_k - \xx_{k + 1} \ra, 
	\ea
	$$
	where
	the subgradient $g'(\xx_{k + 1}) = \gamma_k \nabla \ff(\yy_k)^{\top} \frac{\ff_{k + 1}}{\| \ff_{k + 1} \|_2}$
	with $\ff_{k + 1} \Def \ff(\yy_k) + \gamma_k\nabla \ff(\yy_k)(\xx_{k + 1} - \yy_k )$,
	satisfies the stationary condition for the method step:
	\beq \label{PropStatCond2}
	\ba{rcl}
	\la g'(\xx_{k + 1}), \xx - \xx_{k + 1} \ra & \geq & 0, \qquad \forall \xx \in \X.
	\ea
	\eeq
	A few comments are in order now about the use of the subgradient above. Note that we wish to impose an assumption on $\ff$ which can ensure that $\ff(\yy_k) + \gamma_k \nabla \ff(\yy_k)(\xx - \yy_k) \neq \mathbf{0} \in \R^n$. First, some preliminaries. Under Assumption~\ref{AssumptionLipCont} on $\ff$, it holds that:
	\begin{align}
		& \exists \mathcal{F} \in (0, \infty) \text{ s.t. } \norm{\ff(x)} \leq \mathcal{F}, \forall \xx \in \X \quad \text{ by continuity of $\ff$} \label{app-eq-finite-func}\\
		& \exists \mathcal{G} \in (0, \infty) \text{ s.t. } \norm{\nabla \ff(x)} \leq \mathcal{G}, \forall \xx \in \X \quad \text{ by continuous differentiability of $\ff$} \label{app-eq-finite-grad}
	\end{align}
	From here, we can bound the products between Jacobians and iterates as follows:
	\begin{equation}
			\norm{ \nabla \ff(\xx) (\yy - \zz)} \leq \norm{ \nabla \ff(\xx)} \norm{\yy - \zz} \leq \mathcal{G} \diam, \quad \forall \xx, \yy, \zz \in \X.
	\end{equation}
	Thus, without loss of generality, we can shift $\ff$ by a constant vector of identical values depending on $\mathcal{G} \diam$ such that we ensure, for example, $\ff(\yy_k) + \gamma_k \nabla \ff(\yy_k)(\xx - \yy_k) > \mathbf{0}$ component-wise.
	Hence, combining these observations with \eqref{GNOneStep}, we have
	$$
	\ba{rcl}
	\varphi(\yy_k) - \varphi(\yy_{k + 1})  & \geq & 
	\la g'(\xx_{k + 1}), \yy_k - \xx_{k + 1} \ra
	- \frac{\gamma_k^2}{2} \mathcal{S}	\\
	\\
	&  \overset{\eqref{PropStatCond2}}{\geq} &
	\max\limits_{\xx \in \X} \la g'(\xx_{k + 1}), \yy_k - \xx \ra
	- \frac{\gamma_k^2}{2} \mathcal{S}.
 	\ea
	$$
Then, by lower bounding appropriately using~\eqref{app-eq-finite-func} and \eqref{app-eq-finite-grad}, we get:
	$$
	\ba{rcl}
	\varphi(\yy_k) - \varphi(\yy_{k + 1}) & \geq  &
	\frac{\gamma_k}{\mathcal{F} + \mathcal{G} \diam}
	\max\limits_{\yy \in \X} \la \nabla \ff(\yy_k)^{\top} \ff(\yy_k), \yy_k - \yy \ra
	- \gamma_k^2\Bigl(  \frac{\mathcal{G} \diam^2}{\mathcal{F} + \mathcal{G} \diam}  
	+ \frac{\mathcal{S}}{2}\Bigr) \\
	\\
	& = &
	\frac{\gamma_k}{\mathcal{F} + \mathcal{G} \diam}
	\max\limits_{\yy \in \X}
	\la \nabla \Phi(\yy_k), \yy_k - \yy \ra
	- \gamma_k^2\Bigl(  \frac{\mathcal{G} \diam^2}{\mathcal{F} + \mathcal{G} \diam}  
	+ \frac{\mathcal{S}}{2}\Bigr).
	\ea
	$$
	Substituting $\gamma_k := \frac{1}{\sqrt{1 + k}}$ and telescoping this bound
	would lead to the desired global convergence (for the details, see the end of the proof
	of Theorem~\ref{TheoremGap}).
\end{proof}

\section{Interpretation of $\Delta_k$ in the non-convex setting}
\label{app-section-interp-delta-nonconvex}

While we cannot make any strong claims about the meaning of $\Delta_k$ in general, we can provide an additional observation for this quantity 
when the outer component $F$ is smooth inside a ball included in $\X$. 

Thus, consider a ball of radius $\varepsilon$ centered at $\yy_k$ denoted by $B(\yy_k, \varepsilon) = \{ \xx \in \R^d \; : \; \| \xx - \yy_k \| \leq \varepsilon  \}$,
and set $\mathcal{B} =  B(\yy_k, \varepsilon) \cap \X$. Assuming that $F(\uu, \xx)$ is differentiable at all points from $\R^n \times \mathcal{B}$,
and that its gradient is Lipschitz continuous with constant $L_F$, we have for any $\xx \in \mathcal{B} \subseteq \mathcal{X}$:
$$
\ba{rcl}
\Delta_k & = &
\max\limits_{\xx \in \X}\Bigl[ \varphi(\yy_k)
-  F\big(\ff(\yy_k) + \nabla \ff(\yy_k)(\xx - \yy_k), \xx\big) \Bigr]\\ 
\\
& \geq & \max\limits_{\xx \in \mathcal{B}}\Bigl[
\varphi(\yy_k)
- F(\ff(\yy_k), \yy_k)
- \la \frac{\partial F}{\partial \uu}( \ff(\yy_k), \yy_k ), \nabla \ff(\yy_k)(\xx - \yy_k) \ra \\
\\
& & \qquad \quad
- \; \la \frac{\partial F}{\partial \xx}( \ff(\yy_k), \yy_k ), \xx - \yy_k \ra
- \frac{L_F}{2} \bigl( \| \nabla \ff(\yy_k) \|^2 + 1 \bigr) \cdot \varepsilon^2 \Bigr]\\
\\
& = & 
\max\limits_{\xx \in \mathcal{B}}\Bigl[ \la \nabla \varphi(\yy_k), \yy_k - \xx \ra \Bigr] - \frac{L_F}{2} \bigl( \| \nabla \ff(\yy_k) \|^2 + 1 \bigr) \cdot \varepsilon^2. 
\ea
$$

Hence, for a small enough ball, $\Delta_k$ is an $\bigO{\epsilon^2}$-approximation of the original FW gap restricted to the considered neighborhood. If, in addition, the composite function $\varphi$ is convex in  $\mathcal{B}$ and there is a local optimum $\xopt \in \mathcal{B}$, then $\Delta_k$ is an $\bigO{\epsilon^2}$-approximation of functional suboptimality.

\section{Additional Application Examples}
\label{app-extra-examples}
\begin{example} \label{ExampleGNM}
We define a generalized nonlinear model as, 
\begin{equation}
	\label{app-eq-gnlm}
F(\uu, \xx) \equiv \sum_{i = 1}^n \phi(u^{(i)}),
\end{equation}
where $\phi : \R \to \R$ is a fixed convex loss function,
and $n$ is the number of data points.
Problem \eqref{MainProblem} then reduces to training a (non-convex) model, for example a neural network, with respect to the constraint set $\X$:
$
\ba{rcl}
\min\limits_{\xx \in \X}
\sum\limits_{i = 1}^m \phi( f_i(\xx) ).
\ea
$

Solving this problem then involves training a linear model
within the basic subroutine \eqref{MainSubproblem}
$\displaystyle
\ba{rcl}
\min\limits_{\xx \in \X}
\sum\limits_{i = 1}^m \phi( \la \aa_i, \xx \ra + b_i ),
\ea
$
which is a convex problem. Amongst the loss functions relevant to Machine Learning, the following are convex and subhomogeneous thus making $F$ in~\eqref{app-eq-gnlm} satisfy Assumption~\ref{AssumptionF}: 
\begin{itemize}
	\item $\ell_1$-regression: $\phi(t) = |t|$
	\item Hinge loss (SVM): $\phi(t) = \max\{0, t\}$
	\item Logistic loss: $\phi(t) = \log(1 + e^t)$
\end{itemize}
\end{example}

\end{document}